\theoremstyle{plain}
\newtheorem{prop}{\protect\propositionname}
\theoremstyle{plain}
\newtheorem{thm}{\protect\theoremname}
\theoremstyle{plain}
\newtheorem{assumption}{\protect\assumptionname}
\theoremstyle{plain}
\newtheorem{lem}[thm]{\protect\lemmaname}
\theoremstyle{plain}
\providecommand{\assumptionname}{Assumption}
\providecommand{\lemmaname}{Lemma}
\providecommand{\propositionname}{Proposition}
\providecommand{\corollaryname}{Corollary}
\providecommand{\theoremname}{Theorem}
\newtheorem{defi}{Definition}
\newcommand{\red}[1]{\textcolor{black}{#1}}
\title{Estimating the Arc Length of the Optimal ROC Curve and Lower Bounding the Maximal AUC}
\begin{document}

\author{%
  Song Liu\thanks{Webpage: \url{https://allmodelsarewrong.net}} \\
  School of Mathematics\\
  University of Bristol\\
  Bristol, BS8 1UG, UK\\
  \texttt{song.liu@bristol.ac.uk} \\
}

\maketitle

\begin{abstract}
\red{
In this paper, we show the arc length of the optimal ROC curve is an $f$-divergence. By leveraging this result, we express the arc length using a variational objective and estimate it accurately using positive and negative samples. 
We show this estimator has a non-parametric convergence rate $O_p(n^{-\beta/4})$ ($\beta \in (0,1]$ depends on the smoothness). 
Using the same technique, we show the surface area between the optimal ROC curve and the diagonal can be expressed via a similar variational objective. 
These new insights lead to 
a novel classification procedure that maximizes an approximate lower bound of the maximal AUC. 
Experiments on CIFAR-10 datasets show the proposed two-step procedure achieves good AUC performance in imbalanced binary classification tasks.
}
\end{abstract}
\section{Introduction}

The study of Receiver operating characteristic (ROC) curves has a long history in medicine \citep{lusted1971signal}, psychology \citep{green1966signal} and radiology \citep{hanley1982meaning,goodenough1974radiographic}. 
In machine learning, ROC curves have been primarily used to analyze the performance of different classification algorithms \citep{Fawcett2006,flach2016roc}. 
Indeed, the Area Under the Curve (AUC) encodes a classifier's ranking accuracy, making it a preferable performance metric for imbalanced class classification \cite{Fawcett2006,cortes2003}. 
%
In recent years,
ROC curves have also been used in comparing two distributions and achieved promising results. Examples include analyzing the mode collapsing issue of Generative Adversarial nets (GAN) \citep{Lin2018}, and diagnosing the performance of an amortized Markov Chain Monte Carlo \citep{hermans2020likelihood}. 

In applications that require computing statistical discrepancy between distributions (e.g. GAN \cite{Goodfellow2014} or Variational Inference (VI) \cite{Blei2017}), $f$-divergences are widely used discrepancy measures. The family of $f$-divergences includes Kullback-Leibler divergence \cite{Kullback1951} and Total Variation distance. It has been shown that $f$-divergences, generally, can be expressed via variational objectives and efficiently approximated from empirical samples \cite{Nguyen2010,Nowozin2016}. 

Since the ROC curves are used as performance metrics in many two sample applications, are they in any way related to $f$-divergences?
For example, can AUC be an $f$-divergence between positive and negative data distributions given some classification score function? An earlier investigation proves that the answer is no when the score function is the likelihood ratio \citep{reid11a}. Nonetheless, this result inspired us to look for $f$-divergence from other geometries of the ROC curve. 

In this paper, we show that, when using the likelihood ratio score, a novel $f$-divergence arises from the \emph{arc length} of the corresponding ROC curve. By leveraging this result, we can express the arc length using a variational objective and approximate it using only samples from two distributions.
We show this arc length estimator is also a consistent estimator to the arctangent of likelihood ratio and has a non-parametric convergence rate $O_p(n^{-\beta/4})$, where $\beta\in (0,1]$ depends on the smoothness of the true arctangent likelihood ratio. 
Moreover, by parameterizing the ROC curves of positive and negative \emph{mixtures} distributions, 
the surface area \red{between the optimal ROC curve and the diagonal} can be expressed via a similar variational objective. With the help of our arctangent ratio estimator, we can approximately maximize a lower bound to this surface area.  
We point out the similarity between this lower bound maximization and the classic AUC maximization  \cite{cortes2003}.
We show our approximated optimal score achieves comparable performance to a state of the art AUC maximizer in an imbalanced classification problem on CIFAR-10 dataset.
%


\section{Background}
\subsection{An Illustrative Example}
\begin{figure*}[t]
    \centering
\subfigure[different distributions, long ROC]{\includegraphics[width=.32\textwidth]{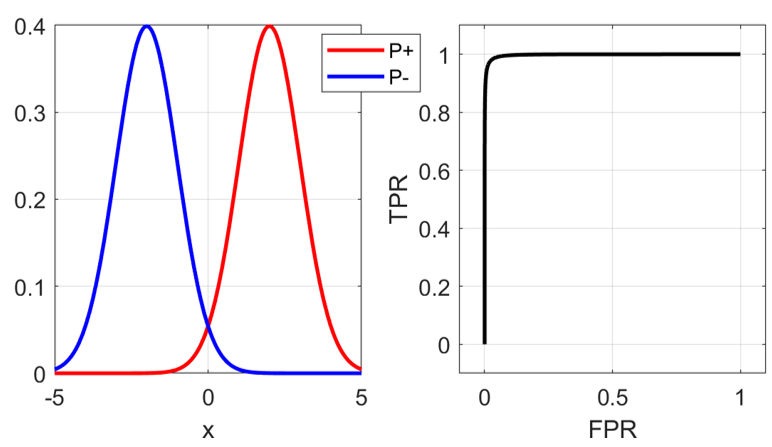}
\label{fig.goodranking.largeauc}
}
\subfigure[different distributions, long ROC]{\includegraphics[width=.32\textwidth]{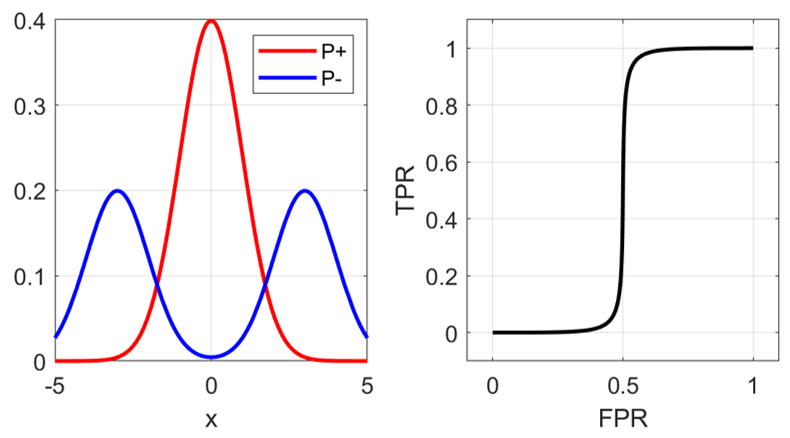}
\label{fig.poorranking1}
}
\subfigure[same distribution, short ROC]{\includegraphics[width=.32\textwidth]{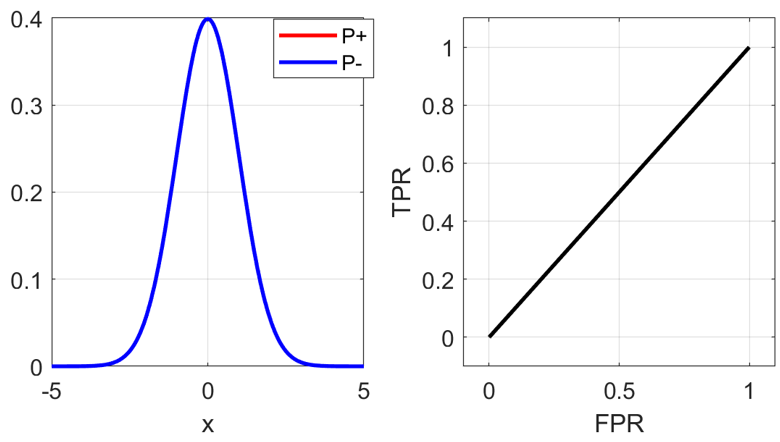}
\label{fig.poorranking2}
}
\caption{
    ROC curves generated for one dimensional datasets using the identity classification score function $t(x) = x$. Notice that
    the arc length of ROC curves seem to be a good indication on how different the positive (red) and negative (blue) data distributions are.  
    }
\label{fig.ranking.vs.overlapping}
\end{figure*}

ROC curves are frequently used to visualize binary classification performance, and 
we often rely on the Area Under the Curve (AUC) as a numerical metric for selecting a good classifier. In this section, we highlight an often overlooked ROC geometry: arc length. We illustrate its potential as a good discrepancy measure between positive and negative data distributions.  

Let us consider the one-dimensional distributions and the ROC curves in Figure \ref{fig.ranking.vs.overlapping}. In this example, ROC curves are generated using the identity score function $t(x) = x$.

In both (a) and (b), $p_+$ and $p_-$ are quite different, and thus the discrepancies between positive and negative distributions should be large in both cases. 
In (c), the densities $p_+$ and $p_-$ are the same and consequently their discrepancy should be smaller than that of both (a) and (b).
Further, notice that 
both (a) and (b) have long ROC curves ($\approx 2$) while (c) has a shorter one ($\approx \sqrt{2}$). 
This example suggests that 
the more similar $p_+$ and $p_-$ are, the shorter the ROC curve is.

However, we can see that $t(x) = x$ is a special choice:
If $t(x)=0$, the arc length will not reflect any discrepancy between data distributions at all. 
This observation inspires the following questions: 
Why is the arc length in this example good at telling the differences between two data distributions? 
Are there other score functions whose ROC arc lengths are also good discrepancy measures? What are the practical applications of studying the arc length of ROC? 
In the following sections, we study the arc length of a ROC curve under a probabilistic framework and provide answers to these questions.  


\subsection{ROC Curve in a Probabilistic Setting}
Suppose we have positive and negative datasets $X_+ := \{\boldx^+_i\}_{i=1}^{n_+}$ and $X_- :=\{\boldx^-_i\}_{i=1}^{n_-}$ drawn from two distributions $\mathbb{P}_+$ and $\mathbb{P}_-$ respectively. These distributions have respective probability density functions $p_+(\boldx)$ and $p_-(\boldx)$ that are both defined on the domain $\mathcal{X}\subseteq \mathbbR^d$. 
\red{A classification score function (score function for short)} takes a sample $\boldx$ as input and outputs a real-valued score. Suppose we have a score function $t(\boldx) \in \mathbb{R}$. 
\red{We classify $\boldx$ as positive if $t(\boldx) > \tau$, where $\tau$ is a threshold.} 

Let $F_+$ and $F_-$ denote the Cumulative Distribution Functions (CDFs) of $t(\boldx^+)$ and $t(\boldx^-)$ respectively. 
Then we can define the following quantities: 
\begin{itemize}
    \item 
    False Positive Rate (FPR) \red{at threshold $\tau$}, $\tilde{F}_-(\red{\tau}) := 1 - F_-(\red{\tau})$
    \item 
    True Positive Rate (TPR) \red{at threshold $\tau$}, $\tilde{F}_+(\red{\tau}) := 1 - F_+(\red{\tau})$
    \item 
    The ROC curve of a score function $t$: the graph of function $\tilde{F}_+[\tilde{F}_-^{-1}(s)]$, where $s \in [0,1]$. 
\end{itemize}
The above definition of ROC curve requires $F_-$ to be strictly increasing. In this paper, we assume $F_+$, $F_-$ to be both strictly increasing. Obviously, both $F_+$ and $F_-$ depend on the choice of score function $t$. 

\subsection{Arc Length of ROC Curve}
\label{sec.arclenth}

Due to the strict monotonicity of $F_+$ and $F_-$, $\tilde{F}_+$ and $\tilde{F}_-$ form a bijective parameterization of the ROC curve in the sense that 
each point on this ROC curve can be written as $(\tilde{F}_-(\tau_0), \tilde{F}_+(\tau_0))$ for a unique $\tau_0 \in \mathbbR$.
Using the line integral formula, the arc length of an ROC  curve for a fixed score function $t$ can be expressed using the derivatives of $\tilde{F}_-$ and $\tilde{F}_+$: 
\begin{align}
    \label{eq.length00}
    \arc{\mathrm{ROC}}(t) &:= \int_{-\infty}^{\infty}  \sqrt{\left[\partial_{\tau}  \tilde{F}_+(\tau)\right]^2 +   \red{\left[\partial_{\tau}\tilde{F}_-(\tau)\right]^2}} \mathrm{d} \tau
    = \int_{-\infty}^{\infty} \sqrt{ f_{+}(\tau)^2 +   f_{-}(\tau)^2} \text{d} \tau,
\end{align}
where $f_{+}(\tau)$ and $f_{-}(\tau)$ are the density functions of $t(\boldx^{+})$ and $t(\boldx^{-})$ respectively. 
Although \eqref{eq.length00} is an elementary result, it has been seldom discussed in the ROC literature. 
%
Authors in \cite{edwards2007utility, edwards2008optimality} have proposed a performance metric computed over the ``ROC hypersurface'' and \eqref{eq.length00} is used to justify such a metric in a binary classification setting. 

Using \eqref{eq.length00}, we can confirm a simple geometric fact whose proof can be found in Appendix  \ref{sec.poof.prop.roc}: 
\begin{prop}
    \label{prop.roc}
    $\arc{\mathrm{ROC}}(t) \in [\sqrt{2},2]$, for all $t$. 
\end{prop}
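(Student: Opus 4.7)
The plan is to apply two elementary pointwise inequalities to the integrand of \eqref{eq.length00} and then integrate, exploiting the fact that $f_+$ and $f_-$ are probability density functions and therefore each integrates to $1$ over $\mathbb{R}$.

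First I would establish, for any non-negative reals $a,b \geq 0$, the double inequality
\begin{equation*}
\frac{a+b}{\sqrt{2}} \;\leq\; \sqrt{a^2+b^2} \;\leq\; a+b.
\end{equation*}
The upper bound follows from $(a+b)^2 = a^2+b^2+2ab \geq a^2+b^2$ since $ab \geq 0$. The lower bound follows from the Cauchy--Schwarz (or QM--AM) inequality $2(a^2+b^2) \geq (a+b)^2$. Both are textbook facts, so I would just state and verify them in one line.

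Next I would apply these inequalities pointwise with $a = f_+(\tau)$ and $b = f_-(\tau)$ inside the integral of \eqref{eq.length00}. This gives
\begin{equation*}
\frac{1}{\sqrt{2}}\int_{-\infty}^{\infty}\!\bigl(f_+(\tau)+f_-(\tau)\bigr)\,\mathrm{d}\tau \;\leq\; \arc{\mathrm{ROC}}(t) \;\leq\; \int_{-\infty}^{\infty}\!\bigl(f_+(\tau)+f_-(\tau)\bigr)\,\mathrm{d}\tau.
\end{equation*}
Since $f_+$ and $f_-$ are densities, $\int f_+ \,\mathrm{d}\tau = \int f_- \,\mathrm{d}\tau = 1$, so the right-hand side collapses to $2$ and the left-hand side to $\sqrt{2}$, which yields the claim.

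There is really no main obstacle here: the result is purely an arithmetic consequence of the arc length formula combined with the normalization of densities. The only subtlety worth mentioning in the write-up is that the strict monotonicity assumption on $F_+,F_-$ guarantees that $f_+$ and $f_-$ exist and are non-negative almost everywhere, so the inequalities can indeed be applied pointwise before integrating. Equality in the upper bound would correspond to $f_+$ and $f_-$ having disjoint supports (perfect discrimination), while equality in the lower bound corresponds to $f_+ = f_-$ almost everywhere (no discrimination), which matches the intuition from Figure \ref{fig.ranking.vs.overlapping}.
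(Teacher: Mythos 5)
Your proof is correct and takes essentially the same approach as the paper: the paper derives the lower bound $\sqrt{a^2+b^2}\ge(a+b)/\sqrt{2}$ via Jensen's (equivalently QM--AM) and the upper bound $\sqrt{a^2+b^2}\le a+b$ via the triangle inequality, then integrates using $\int f_+=\int f_-=1$, exactly as you do.
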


This result reflects the geometric observation that any monotone curve (such as ROC curve) starts and ends at two opposite corners of the ROC space $[0,1]^2$ has an arc length between $\sqrt{2}$ and $2$.
%



\section{$f$-divergences Arising from ROC Arc Length}
\label{sec.fdiv}
\subsection{$f$-divergence of Score Distributions}
\label{sec.fdiv.on.score}
Among many discrepancies measures, $f$-divergence has been widely used in many applications.
\begin{defi} Let $p$ and $q$ be densities of two continuous distributions. 
An $f$-divergence is defined as: 
    $\mathrm{D}_g(p|q):= \mathbb{E}_{q} \left[ g\left( \frac{p(\boldz)}{q(\boldz)} \right) \right]$, 
    where $g$ is convex and lower-semicontinuous satisfying $g(1)= 0$. 
\end{defi}
Now let us slightly rewrite \eqref{eq.length00}. Assuming $f_{-}$ is strictly positive (in which case $F_-$ is strictly increasing), we can write 
\begin{align}
    \label{eq.roc.length.0}
    \arc{\mathrm{ROC}}(t) - \sqrt{2} 
    = \mathbb{E}_{f_-} \sqrt{\left[\frac{f_{+}(\tau)}{f_{-}(\tau)} \right]^2 + 1}  - \sqrt{2}. 
    = \mathbb{E}_{f_-} \left[ g\left(\frac{f_{+}(\tau)}{f_{-}(\tau)} \right) \right],
\end{align}
\red{where $g(s) = \sqrt{s^2 + 1} - \sqrt{2}$}. 
Equation \eqref{eq.roc.length.0} yields the first important result of this paper:  $\arc{\mathrm{ROC}}(t) - \sqrt{2}$ is an $f$-divergence between score densities $f_+$ and $f_-$ since $g(s)$ is a convex function and $g(1) = 0$. This result confirms, for any given $t$, $\arc{\mathrm{ROC}}(t)$ is a good discrepancy for measuring positive and negative scores (i.e., score distributions). It also explains why the ROC arc length in Figure \ref{fig.ranking.vs.overlapping} is a good discrepancy measure: 
Given $t(x)=x$, the score distributions are same as the data distributions in each plot. 
The arc length of ROCs in Figure \ref{fig.ranking.vs.overlapping} plots are $f$-divergences of the score distributions hence are also $f$-divergences of data distributions.



Although $\arc{\mathrm{ROC}}(t) - \sqrt{2}$ is an $f$-divergence of score distributions, it is not an $f$-divergence of the positive and negative \emph{data} distributions for general $t$. The choice $t(x) = x$ in the toy example does not have simple analogues for higher dimensional datasets. Are there choices of $t$ such that the arc lengths of their ROC curves are good discrepancy measures on data distribution?
In what follows, we show when $t$ is a bijective transform of the likelihood ratio $\frac{p_+(\boldx)}{p_-(\boldx)}$, $\arc{\mathrm{ROC}}(t)$ encodes the differences between $p_+(\boldx)$ and $p_-(\boldx)$ in the form of an $f$-divergence between $p_+(\boldx)$ and $p_-(\boldx)$. 

\subsection{$f$-divergences of Data Distributions}
Using the law of the unconscious statistician, 
we can express $\arc{\mathrm{ROC}}(t)$ in terms of an expectation with respect to the negative data density $p_-(\boldx)$:
$
    \mathbbE_{p_-}\sqrt{\left[\frac{f_{+}(t(\boldx))}{f_{-}(t(\boldx))} \right]^2 + 1}.
$
Consider a special family of score functions: $t^*(\boldx) = \gamma\left(\frac{p_+(\boldx)}{p_-(\boldx)}\right)$ where $\gamma$ is any strictly increasing function.
Due to the Neyman-Pearson lemma \citep{NeymanPearson1933}, $\mathrm{ROC}(t^*)$ has the highest TPR at any FPR level. Geometrically speaking, they dominate all other ROC cuves in an ROC plot and have the maximal AUC. 
Hence, we refer to $t^*$ as the optimal score and  $\mathrm{ROC}(t^*)$ as the optimal ROC curve. 
For convenience, we denote the $\mathrm{ROC}(t^*)$ as $\mathrm{ROC}^*$ which reads ``rock star''. 
It can be shown that 
\begin{align}
    \label{eq.ratiosratioisratio}
    \frac{f_{+}(t^*(\boldx_0))}{f_{-}(t^*(\boldx_0))} =  \frac{\int_{\boldx:  \gamma\left(\frac{p_+(\boldx)}{p_-(\boldx)}\right) = t^*(\boldx_0) }p_+(\boldx) \dx}{\int_{\boldx:  \gamma\left(\frac{p_+(\boldx)}{p_-(\boldx)}\right) = t^*(\boldx_0) } p_-(\boldx)\dx} = \frac{p_+(\boldx_0)}{p_-(\boldx_0)}, ~~~ \forall \gamma
\end{align}
where the second equality holds due to  
$\frac{\int_D a(\boldx)  \mathrm{d}\boldx}{\int_D  b(\boldx) \mathrm{d}\boldx} = \gamma^{-1}(C),$ when $\gamma\left(\frac{a(\boldx)}{b(\boldx)}\right) \equiv C,\ \forall \boldx \in D$. When $\gamma(\boldx) = \boldx$, 
\eqref{eq.ratiosratioisratio} expresses a known result \citep{eguchi2002class}, and is often given in plain English as ``the density ratio of the likelihood ratio score is the likelihood ratio itself''. 

Finally, $\arcrocstar$ takes an elegant form free from $t^*$ or $\gamma$: $    \mathbbE_{p_-(\boldx)} \sqrt{\left[\frac{p_+(\boldx)}{p_-(\boldx)} \right]^2 + 1}$. Equivalently, 
\begin{align}
    \label{eq.optimal.length}
    \arcrocstar - \sqrt{2} = 
    \mathbbE_{p_-(\boldx)} \left[g\left(\frac{p_+(\boldx)}{p_-(\boldx)}\right)\right]
\end{align}

We can see that the same $f$-divergence arises from computing the arc length of $\mathrm{ROC}^*$. However, unlike the $f$-divergence given in \eqref{eq.roc.length.0}, \eqref{eq.optimal.length} is an $f$-divergence between \emph{data} distributions, not score distributions. 
It shows that as long as we use the the optimal scores, the arc length of the optimal ROC can indeed reflect the differences  between \emph{data} distributions. From now on, we will refer to $\arcrocstar - \sqrt{2}$ as the ROC divergence. To the best of our knowledge, \eqref{eq.optimal.length} has not been presented in literature before.

By definition, the ROC divergence is symmetric. Moreover, as a result of Proposition \ref{prop.roc}, the ROC divergence is upper bounded by $2 - \sqrt{2}$ and lower bounded by 0. 
Some geometric properties of $\rockstar$ can be found in Section \ref{proof.convex.longest}. 




%

%

%
%

%
%

%
%
%
%
%

%

%

%

\section{Estimating the Arc Length of $\mathrm{ROC}^*$}
\label{sec.est.arc.length}

\subsection{A Variational Objective}
\label{sec.fench}
%
To numerically approximate the arc length using samples alone, 
we leverage that $\arcrocstar - \sqrt{2}$ is an $f$-divergence.
Utilizing Fenchel's duality \citep{hiriart2004fundamentals}, authors in \cite{Nguyen2010} show that an $f$-divergence $\mathrm{D}_g(p_+|p_-)$ has a variational representation: 
\begin{align*}
    \mathrm{D}_g(p_+|p_-) = \int_\mathcal{X} p_-(\boldx) g\left[\frac{p_+(\boldx)}{p_-(\boldx)}\right] \dx 
    & =  \int_\mathcal{X} p_-(\boldx) \sup_{u} \left\{u(\boldx)\cdot\left[\frac{p_+(\boldx)}{p_-(\boldx)}\right] - g'[u(\boldx)] \right\} \dx \\
    &=  \sup_{u} \int_\mathcal{X} p_+(\boldx) u(\boldx) - \int_\mathcal{X} p_-(\boldx)g'[u(\boldx)] \dx, 
\end{align*}
where $g'$ is the convex conjugate of $g$ and the supremum is taken over all measurable functions. 
In the case of the ROC divergence, $g(z) = \sqrt{z^2 + 1} - \sqrt{2}$ and $z\in[0,\infty]$
thus $g$ has a convex conjugate $g'(z') = -\sqrt{1-z'^2} - \sqrt{2}$, $z' \in [0,1]$. Rewriting $\arcrocstar -\sqrt{2}$ using the above variational representation and dropping the $-\sqrt{2}$, we obtain: 
\begin{align*}
    &\arcrocstar = 
    \sup_{u \in [0,1]} \mathbbE_{p_+}[u(\boldx)]
     + \mathbbE_{p_-} [\sqrt{1 - u^2(\boldx)}].
\end{align*}
We reparameterize $u(\boldx) = \sin[v(\boldx)]$, where $v\in [0,\pi/2]$:
\begin{align}
    \label{eq.roc.length}
    \arcrocstar = 
    \sup_{v \in[0,\pi/2]} \mathbbE_{p_+} \sin[v(\boldx)]  + \mathbbE_{p_-} \cos[v(\boldx)]. 
\end{align}
Differentiating the objective in \eqref{eq.roc.length} for $v$ and setting the derivative to zero, we can see the supremum is attained at $v^* = \atan\frac{p_+}{p_-}$. In other words, the optimal $v^*$ is the arctangent likelihood ratio function. 
\begin{wrapfigure}{r}{0.4\textwidth}
  \begin{center}
    \includegraphics[width=0.4\textwidth]{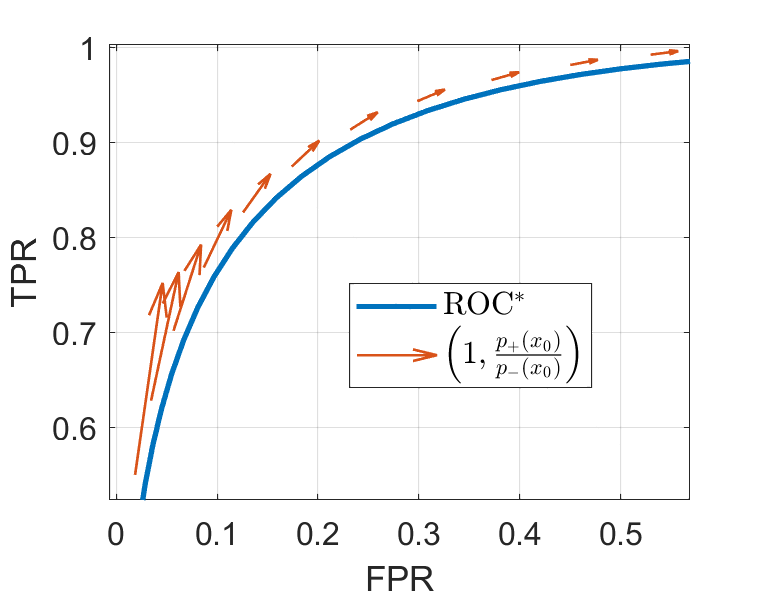}
  \end{center}
\caption{$\rockstar$ and its tangent marked by vector $\left(1, \frac{p_+(\boldx_0)}{p_-(\boldx_0)}\right)$ (scaled to fit). $p_+ = \mathcal{N}(1,1), p_- = \mathcal{N}(-1,1)$.}
\label{fig.roc.atan}
\end{wrapfigure}

It is also interesting to see how $v^*$ is visualized in the ROC plot. 
We can see the tangent of $\rockstar$ at an FPR level $s_0 \in [0,1]$ is 
\begin{align}
\label{eq.slope.angle}
    \partial_s \tilde{F}_+(\tilde{F}_-^{-1}(s_0)) = \frac{f_{+}(\tilde{F}_-^{-1}(s_0))}{f_{-}(\tilde{F}_-^{-1}(s_0))}= \frac{p_+(\boldx_0)}{p_-(\boldx_0)},
\end{align}
where $\boldx_0$ is any point in $\mathcal{X}$ that satisfies the equality $\gamma\left(\frac{p_+(\boldx_0)}{p_-(\boldx_0)}\right)= \tilde{F}_-^{-1}(s_0)$. \eqref{eq.slope.angle} is a known result \citep{flach2016roc}. 
In other words, 
$v^*= \atan \frac{p_+}{p_-} $ is the \emph{slope angle} of $\rockstar$. 
See Figure \ref{fig.roc.atan} for a visualization of the tangent of $\rockstar$ expressed by the likelihood ratio.  



Moreover,  using \eqref{eq.roc.length}, we can obtain a relationship between $\arcrocstar$ and the total variation distance between $\mathbb{P}_+$ and $\mathbb{P}_-$ (denoted as $\mathrm{TV}(\mathbb{P}_+, \mathbb{P}_-)$).
\begin{prop}
    \label{thm:prop.TVbound}
     $\mathrm{TV}(\mathbb{P}_+, \mathbb{P}_-)$ has a lower and upper bound expressed via $\arcrocstar$:
    \red{
    \[ 
    \max_{a\in [0,1]}\frac{2}{\pi}\left[\frac{\arcrocstar - 2\sqrt{1-a^2}}{a} + \arccos(a) - \arcsin(a)\right] \le \mathrm{TV}(\mathbb{P}_+, \mathbb{P}_-) \le \arcrocstar - 1.
    \]}
    \vspace*{-5mm}
\end{prop}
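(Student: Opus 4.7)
The plan is to prove the upper and lower bounds separately, both by leveraging the variational representation $\arcrocstar = \sup_{v \in [0,\pi/2]} \mathbbE_{p_+}[\sin v] + \mathbbE_{p_-}[\cos v]$ derived in \eqref{eq.roc.length}.

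For the upper bound $\mathrm{TV}(\mathbb{P}_+,\mathbb{P}_-) \le \arcrocstar - 1$, I would plug the simple test function $v(\boldx) = (\pi/2)\mathbb{1}_A(\boldx)$ into the variational objective, for any measurable set $A \subseteq \mathcal{X}$. Since $\sin v = \mathbb{1}_A$ and $\cos v = 1 - \mathbb{1}_A$ pointwise, the supremum satisfies $\arcrocstar \ge \mathbb{P}_+(A) + 1 - \mathbb{P}_-(A)$, and taking $\sup_A$ on the right recovers $\arcrocstar \ge 1 + \mathrm{TV}(\mathbb{P}_+,\mathbb{P}_-)$.

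For the lower bound, I would start from the optimizer $v^* = \atan(p_+/p_-) \in [0,\pi/2)$, at which $\arcrocstar = \mathbbE_{p_+}[\sin v^*] + \mathbbE_{p_-}[\cos v^*]$. Since $\sin$ and $\cos$ are concave on $[0,\pi/2]$, Jensen gives $\arcrocstar \le \sin \mu_+ + \cos \mu_-$, with $\mu_\pm := \mathbbE_{p_\pm}[v^*]$. The map $F(x,y) = \sin x + \cos y$ is concave on $[0,\pi/2]^2$, so I would upper bound it by its tangent plane at the carefully chosen point $(\pi/2 - \alpha, \alpha)$ for $\alpha \in (0, \pi/2]$, giving
\begin{equation*}
\sin \mu_+ + \cos \mu_- \le 2 \cos\alpha + \sin\alpha\,(\mu_+ - \mu_- + 2\alpha - \pi/2).
\end{equation*}
The critical property of this base point is $\cos(\pi/2 - \alpha) = \sin\alpha$, which forces the coefficients of $\mu_+$ and $-\mu_-$ to coincide, so only the scalar difference $\mu_+ - \mu_-$ survives on the right.

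The final ingredient is a $\mathrm{TV}$-bound on this difference: writing $\mu_+ - \mu_- = \int (v^*(\boldx) - \pi/4)(p_+(\boldx) - p_-(\boldx))\dx$ (the $\pi/4$ shift is free because $\int(p_+ - p_-)\dx = 0$) and applying $|v^* - \pi/4| \le \pi/4$ yields $\mu_+ - \mu_- \le (\pi/4)\int|p_+ - p_-|\dx = (\pi/2)\mathrm{TV}(\mathbb{P}_+,\mathbb{P}_-)$. Substituting this into the tangent bound, dividing by $(\pi/2)\sin\alpha > 0$, and relabelling $a = \sin\alpha$ (so that $\cos\alpha = \sqrt{1-a^2}$, $\alpha = \arcsin a$ and $\pi/2 - \alpha = \arccos a$) produces exactly the per-$a$ inequality stated in the proposition; the $\max_{a \in [0,1]}$ is then immediate. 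The main obstacle is identifying the tangent point $(\pi/2 - \alpha, \alpha)$ — tangenting $F$ at any other base point leaves a bound that depends on $\mu_+$ and $\mu_-$ individually, with no clean way to absorb both into a single $\mathrm{TV}$ bound.
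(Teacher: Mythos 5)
Your proof is correct and arrives at exactly the paper's per-$a$ inequality, but the scaffolding is organized differently. For the upper bound, the paper applies the chord inequalities $\frac{2}{\pi}z \le \sin z$ and $-\frac{2}{\pi}z \le \cos z - 1$ pointwise inside the integral-probability-metric representation of $\mathrm{TV}$; you instead plug the indicator test function $v = (\pi/2)\mathbbm{1}_A$ directly into the variational objective for $\arcrocstar$ and take the supremum over $A$ — a cleaner, one-line dual rendering of the same step. For the lower bound, the paper stays inside the $\mathrm{TV}$ representation and applies, pointwise in $z = v'(\boldx)$, the tangent-line inequalities $z \ge a^{-1}\sin z + \arccos a - a^{-1}\sqrt{1-a^2}$ and $-z \ge a^{-1}\cos z - \arcsin a - a^{-1}\sqrt{1-a^2}$ (tangents of $a^{-1}\sin$ at $\arccos a$ and of $a^{-1}\cos$ at $\arcsin a$), then takes the supremum over $v'$ to pull out $\arcrocstar$. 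You go in the opposite direction: start from $\arcrocstar$ at its optimizer $v^*$, push expectations through with Jensen, bound $\sin\mu_+ + \cos\mu_-$ by the tangent plane of the concave map $F(x,y)=\sin x + \cos y$ at $(\arccos a, \arcsin a)$, and only then invoke $\mathrm{TV}$ via the separate estimate $\mu_+ - \mu_- \le (\pi/2)\,\mathrm{TV}$. The tangent point is the same in both arguments, so the two routes are essentially dual and yield the identical bound. One small observation: your Jensen step is actually dispensable — because the tangent plane is affine, you may apply the scalar tangent bounds $\sin z \le \cos\alpha + \sin\alpha\,(z - \pi/2 + \alpha)$ and $\cos z \le \cos\alpha - \sin\alpha\,(z - \alpha)$ pointwise at $z = v^*(\boldx)$ and then take expectations, which skips Jensen and collapses your argument to a rearrangement of the paper's. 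That said, your presentation makes the origin of the tangent point $(\arccos a, \arcsin a)$ — namely, the constraint $\cos(\pi/2-\alpha)=\sin\alpha$ forcing a single $\mu_+ - \mu_-$ to survive — considerably more transparent than the paper's bare pointwise inequalities.
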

Proof of this proposition can be found in Section \ref{sec:proof.prop.TVBound}. This proposition justifies that $\arcrocstar$ is a valid measure of the discrepancy between $\mathbb{P}_+$ and $\mathbb{P}_-$.  We compare this bound with other known TV bounds in Section \ref{sec.boundcomparisons}.

\subsection{\red{A Tractable Objective for Estimating $\atan{\frac{p_+}{p_-}}$}}
\label{sec.losses}

To use \eqref{eq.roc.length} in practice, we need to find an appropriate function class $\mathcal{F}$.
We can simply restrict $v$ to a bounded (parametric/non-parametric) function class $\mathcal{F}$ and solve the sample version of \eqref{eq.roc.length}:
\begin{align}
    \label{eq.sample.obj}
            \max_{v\in[0,\pi/2], v\in \mathcal{F}} \frac{1}{n_+} \sum_{i=1}^{n_+} \sin(v(\boldx^+_i)) + \frac{1}{n_-} \sum_{i=1}^{n_-} \cos(v(\boldx^-_i)).
\end{align}

In practice, enforcing the boundedness $v \in [0, \pi/2]$ over $\mathcal{X}$ is difficult. We can relax \eqref{eq.sample.obj} by only enforcing the boundedness constraint of $v$ on the sample dataset $X_+ \cup X_-$. 

For example, by letting $\mathcal{F}$ be a Reproducing Kernel Hilbert Space (RKHS) \citep{Scholkopf2001},  we can translate \eqref{eq.sample.obj} into the following optimization problem: 
\begin{align}
    \label{eq:obj_rkhs}
    \hat{v} :=& \argmin_{v\in \mathcal{H}} \ell(v) + \frac{\lambda}{2} \|v\|^2_\mathcal{H}, ~~~ \ell(v) := -\frac{1}{n_+} \sum_{i=1}^{n_+} \sin \langle v, \varphi(\boldx^+_i) \rangle - \frac{1}{n_-} \sum_{i=1}^{n_-} \cos\langle v, \varphi(\boldx^-_i) \rangle \notag \\
    \text{    s.t: } & \langle v, \varphi(\boldx) \rangle \in \left[0, \frac{\pi}{2}\right], ~~~\forall \boldx \in X_+ \cup X_-, 
\end{align}
where $\mathcal{H}$ is a RKHS with a positive definite kernel $k(\boldx, \boldx') = \langle \varphi(\boldx), \varphi(\boldx') \rangle$, $\|\cdot\|_\mathcal{H}$ is the RKHS norm and $\frac{\lambda}{2} \|v\|^2_\mathcal{H}$ is the regularization term.
The optimizer $\langle \hat{v},\varphi(\boldx) \rangle$ is an estimation of $v^*(\boldx)$, the arctangent of the likelihood ratio. 
\eqref{eq:obj_rkhs} is a strictly convex optimization and thus, if a solution $\hat{v}$ exists, it must be unique.

Instead of modelling $\atan \frac{p_+}{p_-}$, we can opt for modelling the log likelihood ratio $\log \frac{p_+}{p_-}$. However, this modelling choice results in an non-convex optimization thus presents extra challenges in the theoretical analysis. 
Details can be found in Section \ref{sec.est.logratio}. 

\subsection{Finite Sample Guarantee}
\label{sec.theory}

%



We show that the solution of \eqref{eq:obj_rkhs}, $\langle \hat{v}, \varphi(\boldx)\rangle$ converges to the true arctangent likelihood ratio (or its projection onto $\mathcal{H}$) as the number of samples goes to infinity.
Below are a few regularity conditions. 
\begin{assumption}
    \label{ass0}
    There exists
    a unique $v^* \in \mathcal{H}$,
    such that $\mathbbE[\nabla_v \ell(v^*)] = 0$ and $\langle v^*, \varphi(\boldx) \rangle \in [0,\pi/2]$ holds for all $\boldx \in \mathcal{X}$. 
    \end{assumption}

A sufficient condition of the above condition is specified in the following proposition. 
\begin{prop}
    \label{prop.optimal.score}
    If there exists a unique $v^* \in \mathcal{H}$, such that $\langle v^*,\varphi(\boldx)\rangle = \atan\left[\frac{p_+(\boldx)}{p_-(\boldx)}\right]$ then Assumption \ref{ass0} holds.
\end{prop}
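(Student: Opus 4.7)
The plan is to exhibit the $v^*$ guaranteed by the hypothesis as the witness for Assumption~\ref{ass0}, and then bootstrap the hypothesized uniqueness of the arctangent-ratio representer in $\mathcal{H}$ into the uniqueness required by Assumption~\ref{ass0}.

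First I would verify the boundedness. Since $p_+(\boldx)/p_-(\boldx)\ge 0$, we have $\atan[p_+(\boldx)/p_-(\boldx)] \in [0,\pi/2)$, which gives $\langle v^*,\varphi(\boldx)\rangle \in [0,\pi/2]$ for every $\boldx \in \mathcal{X}$. Next I would check stationarity by direct computation. The population gradient is
\[
\mathbbE[\nabla_v\ell(v)] = -\mathbbE_{p_+}\!\left[\cos(\langle v,\varphi(\boldx)\rangle)\,\varphi(\boldx)\right] + \mathbbE_{p_-}\!\left[\sin(\langle v,\varphi(\boldx)\rangle)\,\varphi(\boldx)\right],
\]
and the identities $\cos(\atan r) = 1/\sqrt{1+r^2}$ and $\sin(\atan r) = r/\sqrt{1+r^2}$, applied with $r = p_+/p_-$, make both expectations collapse to the common integral $\int \varphi(\boldx)\,p_+(\boldx)p_-(\boldx)/\sqrt{p_+(\boldx)^2+p_-(\boldx)^2}\,\dx$. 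Hence $\mathbbE[\nabla_v\ell(v^*)]=0$.

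For uniqueness, suppose $\tilde v \in \mathcal{H}$ also satisfies both conditions. I would introduce the population objective $L(v) := -\mathbbE[\ell(v)] = \mathbbE_{p_+}\sin(\langle v,\varphi\rangle) + \mathbbE_{p_-}\cos(\langle v,\varphi\rangle)$ and observe that it is concave on the convex set $C = \{v \in \mathcal{H} : \langle v,\varphi(\boldx)\rangle \in [0,\pi/2]\ \forall \boldx \in \mathcal{X}\}$, because $\sin$ and $\cos$ are concave on $[0,\pi/2]$. Both $v^*$ and $\tilde v$ are critical points in $C$, hence global maximizers, so $L$ is constant along the segment joining them. Its second derivative along that segment equals $-\mathbbE_{p_+}[\sin(\cdot)\langle \varphi,\tilde v - v^*\rangle^2] - \mathbbE_{p_-}[\cos(\cdot)\langle \varphi,\tilde v - v^*\rangle^2]$, which must vanish identically. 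Since $v^* = \atan(p_+/p_-)$ lies in $(0,\pi/2)$ where both $\sin$ and $\cos$ are strictly positive, this forces $\langle \tilde v - v^*,\varphi(\boldx)\rangle = 0$ throughout the support of $p_++p_-$, so $\langle \tilde v,\varphi(\boldx)\rangle = \atan[p_+(\boldx)/p_-(\boldx)]$, and the hypothesized uniqueness of the arctangent-ratio representer in $\mathcal{H}$ yields $\tilde v = v^*$.

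The main obstacle is this uniqueness step: the RKHS stationarity condition is only an integral identity against $\varphi(\boldx)$ and does not by itself produce a pointwise relation with $\atan(p_+/p_-)$. The concavity-and-constant-on-segment argument sidesteps this by passing to the second-order information, and it is crucial that $v^*$ lies strictly in the interior of the box so that the strict concavity of $\sin$ and $\cos$ bites along the entire segment; this is precisely what lets me hand the pointwise identity off to the hypothesized uniqueness rather than having to assume a characteristic-kernel property for $\mathcal{H}$.
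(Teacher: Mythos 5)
Your stationarity and boundedness verifications are correct and match the paper in substance, just with different algebra: the paper uses the importance-weighting trick $\mathbbE_{p_-}[\tan\langle v^*,\varphi\rangle\cos\langle v^*,\varphi\rangle\varphi] = \mathbbE_{p_-}[(p_+/p_-)\cos\langle v^*,\varphi\rangle\varphi] = \mathbbE_{p_+}[\cos\langle v^*,\varphi\rangle\varphi]$, while you substitute the explicit identities $\cos(\atan r) = 1/\sqrt{1+r^2}$, $\sin(\atan r) = r/\sqrt{1+r^2}$ and observe both expectations collapse to the same integral. These are equivalent.

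Where you genuinely depart from the paper is the uniqueness step, and your argument is in fact doing more work than the paper's. The paper simply writes ``As $v^*$ is unique by assumption, Assumption \ref{ass0} holds,'' but that one-liner elides a real gap: the hypothesis of the proposition asserts uniqueness of the \emph{representer} of $\atan(p_+/p_-)$ in $\mathcal{H}$, whereas Assumption \ref{ass0} demands uniqueness of the \emph{constrained stationary point} of the population objective. A priori there could exist another $\tilde{v}$ in the feasible box with $\mathbbE[\nabla_v\ell(\tilde{v})]=0$ whose induced score is \emph{not} $\atan(p_+/p_-)$. Your concavity-on-$C$ argument is exactly the right way to close this: concavity makes every stationary point a global maximizer, constancy along the segment forces the negative-definite second-order term to vanish, and strict positivity of $\sin,\cos$ at $v^*$ (since $\atan(p_+/p_-)\in(0,\pi/2)$ where both distributions are positive) forces the pointwise agreement, after which the hypothesized representer uniqueness finishes the job. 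So your route is not just different; it is more rigorous than what the paper records.

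One small caveat worth keeping in mind: your argument produces $\langle\tilde v - v^*,\varphi(\boldx)\rangle = 0$ only for $\boldx$ in the support of $p_+ + p_-$, whereas the proposition's hypothesis is stated as a pointwise identity over all of $\mathcal{X}$. If $\mathcal{X}$ coincides with that support (which the paper implicitly assumes, since $\atan(p_+/p_-)$ must be well-defined and $f_-$ is assumed strictly positive on the score range), this is harmless; otherwise you would need a mild additional assumption to upgrade support-a.e. agreement to $\tilde v = v^*$. This does not affect the validity of the argument under the paper's operating assumptions.
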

The proof can be found in Appendix \ref{sec:sec.proof.optimal.score}. Proposition \ref{prop.optimal.score} states if model is correctly specified and identifiable then Assumption \ref{ass0} holds.
It is possible that there exists a $v^* \in \mathcal{H}$ satisfying $\mathbbE[\nabla_v \ell(v^*)] = 0$ which does not meet the boundedness constraint $[0,\pi/2]$. 
In this paper we only consider situations where Assumption \ref{ass0} holds, which includes all situations where the model is correctly specified and some situations where the model is misspecified. 
\begin{assumption}
    \label{ass.local.region}
    Let $n_\mathrm{min} = \min(n_+, n_-)$. 
    There exists a subspace $\mathcal{H}^* :=\{v\in \mathcal{H} | \|v-v^*\|^2_\mathcal{H} \le \delta_{n_\mathrm{min}}^2\}$, such that $\forall v \in \mathcal{H}^*, \forall \boldx \in X_+ \cup X_-, \langle v, \varphi(\boldx) \rangle \in (0, \frac{\pi}{2})$ holds with high probability. 
    The sequence $\delta_{n_\mathrm{min}}$ is monotonically decreasing as $n_\mathrm{min}$ grows to infinity. 
\end{assumption}
Assumption \ref{ass.local.region} states all $v$ within a vicinity of $v^*$ are in the interior of \eqref{eq:obj_rkhs}'s feasible region with high probability. The following proposition gives a sufficient condition under which Assumption \ref{ass.local.region} holds.
\begin{prop}
\label{prop.neigh}
    Suppose $\|\varphi(\boldx)\|_\mathcal{H} \le 1$.
    If our model is correctly specified as described in Proposition \ref{prop.optimal.score} and $\forall \boldx \in \mathcal{X}$, $\atan\left[\frac{p_+(\boldx)}{p_-(\boldx)} \right]\in [R_1, R_2]$, for some $R_1$ and $R_2$ such that $\frac{\pi}{2} >R_2 > R_1>0$, then there exists $N>0$ such that Assumption \ref{ass.local.region} holds when $n_\mathrm{min}>N$. 
\end{prop}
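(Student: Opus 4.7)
The plan is to reduce everything to a Cauchy--Schwarz estimate using the boundedness $\|\varphi(\boldx)\|_\mathcal{H}\le 1$, together with the fact that $\langle v^*,\varphi(\boldx)\rangle$ sits strictly inside the open interval $(0,\pi/2)$ by hypothesis. Since correct specification (Proposition \ref{prop.optimal.score}) gives $\langle v^*,\varphi(\boldx)\rangle=\atan[p_+(\boldx)/p_-(\boldx)]$, the assumption $\atan[p_+/p_-]\in[R_1,R_2]\subset(0,\pi/2)$ forces $\langle v^*,\varphi(\boldx)\rangle\in[R_1,R_2]$ for \emph{every} $\boldx\in\mathcal{X}$, deterministically. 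This is the crucial observation: the gap to the boundary of the feasibility interval does not depend on the samples.

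Next, I would set $\epsilon:=\min(R_1,\pi/2-R_2)>0$, which is strictly positive by the hypothesis $\pi/2>R_2>R_1>0$. Choose any monotonically decreasing sequence $\delta_{n_\mathrm{min}}>0$, for instance $\delta_{n_\mathrm{min}}=\epsilon/(2+n_\mathrm{min}^{-1})$ or simply $\delta_{n_\mathrm{min}}=(\epsilon/2)\cdot n_\mathrm{min}^{-1/4}$, so that $\delta_{n_\mathrm{min}}\le \epsilon/2$ for all $n_\mathrm{min}>N$ with some finite $N$. For any $v\in\mathcal{H}^*$, Cauchy--Schwarz and the bound $\|\varphi(\boldx)\|_\mathcal{H}\le 1$ give
\begin{align*}
|\langle v-v^*,\varphi(\boldx)\rangle|\;\le\;\|v-v^*\|_\mathcal{H}\,\|\varphi(\boldx)\|_\mathcal{H}\;\le\;\delta_{n_\mathrm{min}}\;\le\;\epsilon/2.
\end{align*}
Combining with $\langle v^*,\varphi(\boldx)\rangle\in[R_1,R_2]$ yields $\langle v,\varphi(\boldx)\rangle\in[R_1-\epsilon/2,R_2+\epsilon/2]\subset(0,\pi/2)$, and this bound holds for \emph{all} $\boldx\in\mathcal{X}$, in particular for the sample points $X_+\cup X_-$.

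Since the inclusion holds pointwise on all of $\mathcal{X}$ without any randomness, the statement ``with high probability'' required by Assumption \ref{ass.local.region} is in fact satisfied with probability one. The monotone-decay requirement on $\delta_{n_\mathrm{min}}$ is satisfied by construction. Taking the same $N$ as above completes the argument.

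There is no genuine obstacle here: the only delicate point is recognizing that the boundedness constraint on $\langle v,\varphi(\boldx)\rangle$ needs to be verified only on the sample set, but the hypothesis actually provides a \emph{uniform} deterministic gap on all of $\mathcal{X}$, turning a potentially probabilistic statement into a deterministic one. The proof is therefore essentially a single Cauchy--Schwarz step combined with a correct choice of the sequence $\delta_{n_\mathrm{min}}$.
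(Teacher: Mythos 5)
Your proof is correct and is essentially the same argument as the paper's: a single Cauchy--Schwarz (Hölder) estimate $|\langle v-v^*,\varphi(\boldx)\rangle|\le\|v-v^*\|_\mathcal{H}\|\varphi(\boldx)\|_\mathcal{H}\le\delta_{n_\mathrm{min}}$, combined with the deterministic gap $\min(R_1,\tfrac{\pi}{2}-R_2)>0$ and the decay of $\delta_{n_\mathrm{min}}$ to pick $N$. Your explicit remark that the inclusion holds uniformly over all of $\mathcal{X}$ — so ``with high probability'' is actually probability one — is a useful clarification of what the paper leaves implicit, but it does not change the route.
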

The proof can be found in Appendix \ref{sec.proof.prop.neigh}. 
Since we use RKHS as the estimator function class, our final assumption is that $v^*$ should be reasonably smooth. In previous works such an assumption depends on the decay of the integral operator's eigenvalues \citep{steinwart2009optimal,fukumizu_2009}. 
In this paper, we measure the smoothness using the \emph{range space} technique which has been recently adopted in \citep{fukumizu2013kernel,Sriperumbudur17}. 
We define
\begin{align*}
    \boldSigma_{v} :=  \mathbbE[\nabla^2_v \ell(v)] 
    = \mathbb{E}_{p_+}[\sin \langle v, \varphi(\boldx) \rangle \cdot \varphi(\boldx,\cdot) \otimes \varphi(\boldx,\cdot) ] + 
    \mathbb{E}_{p_-}[ \cos \langle v, \varphi(\boldx) \rangle \cdot \varphi(\boldx,\cdot) \otimes \varphi(\boldx,\cdot)],
\end{align*}
where $\otimes$ denotes the outer product. 
Given $v_0 \in \mathcal{H}$, $\boldSigma_{v_0}$ is an integral operator on $u \in \mathcal{H}$ and
\begin{align*}
    \boldSigma_{v_0}u = \mathbb{E}_{p_+}[\sin \langle v_0, \varphi(\boldx) \rangle  \cdot\varphi(\boldx,\cdot) \cdot u(\boldx) ] + 
    \mathbb{E}_{p_-}[ \cos \langle v_0, \varphi(\boldx) \rangle \cdot \varphi(\boldx,\cdot) \cdot u(\boldx)].
\end{align*}
By definition $\boldSigma_{v_0}$ is a positive, self-adjoint operator, in the sense that $\langle u, \boldSigma_{v_0}u\rangle \ge 0$, $\langle u, \boldSigma_{v_0}v\rangle = \langle \boldSigma_{v_0},u, v\rangle, \forall v,u\in \mathcal{H}$. Moreover, some algebra shows that $\boldSigma_{v_0}$ is also a bounded and compact operator. See Section \ref{sec:proof.operator.properties} for more details. 

Next, we assume the true arctangent ratio function (or its projection) is in the range space of $\boldSigma_{v^*}$. 
\begin{assumption}
    \label{ass.2}
    Let $\mathcal{R}(\boldSigma_{v^*})$ denote the range space of $\boldSigma_{v^*}$.
    There exists $0< \beta \le 1$, $v^* \in \mathcal{R}(\boldSigma^\beta_{v^*})$, where $C ^\beta$ is the fraction power of a compact, positive and self-adjoint operator $C$. 
\end{assumption}
Note that the larger $\beta$ is, the smoother the functions in the range space are. More discussions on the range space assumption can be found in Section 4.2, \citep{Sriperumbudur17}.
Now we are ready to state our theorem: 
\begin{thm}[Convergence Rate of $\hat{v}$] 
    \label{thm:base}
    Suppose Assumptions \ref{ass0}, \ref{ass.local.region} and
    \ref{ass.2} hold and $\hat{v}$ exists. If $\|\varphi(\boldx)\|_\mathcal{H} \le 1$ and 
    \begin{align*}
        \lambda = \frac{T}{n_{\mathrm{min}}^{1/4}}, ~~~
        \frac{K}{n^{
            \beta/4}_\mathrm{min}} \le \delta_{n_\mathrm{min}} \le \frac{4}{{\max\left(B_+, B_-\right)}},
    \end{align*}
    where 
    $
        B_+ = \|\left(\boldSigma_{v^*} + \lambda\boldI\right)^{-1}\mathbbE_{p_+}[\varphi(\boldx)]\|_\mathcal{H}, ~~
        B_- = \|\left(\boldSigma_{v^*} + \lambda\boldI\right)^{-1}\mathbbE_{p_-}[\varphi(\boldx)]\|_\mathcal{H}
    $
    and $T\ge1, K>0$ are constants that do not depend on $n_\mathrm{min}$, then there exists a constant $N>0$ such that when $n_\mathrm{min}>N$, $\|\hat{v} - v^*\|_\mathcal{H} = O_p(n_\mathrm{min}^{-\beta/4}).$
\end{thm}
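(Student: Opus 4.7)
The plan is to mimic the first-order analysis for kernel M-estimators in the style of Fukumizu (2013) and Sriperumbudur et al.\ (2017), adapted to the constrained problem \eqref{eq:obj_rkhs}. I would split the argument into four steps: (i) reduce to an unconstrained KKT condition inside a neighborhood of $v^*$; (ii) linearize via a Taylor expansion and solve for $\hat v - v^*$ in closed form; (iii) decompose the resulting error into bias, variance, and Hessian-perturbation remainder; (iv) balance them under the prescribed choice of $\lambda$.

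For Step (i), by Assumption~\ref{ass.local.region} every $v$ with $\|v-v^*\|_\mathcal{H}\le\delta_{n_\mathrm{min}}$ evaluates strictly inside $(0,\pi/2)$ on the training sample with high probability, so on that event the boundedness constraint in \eqref{eq:obj_rkhs} is inactive at $\hat v$ and the KKT condition reduces to $\nabla_v\ell(\hat v)+\lambda\hat v=\boldzero$. I would close the circular dependence (``$\hat v$ lies in the ball'' vs.\ ``the constraint is inactive'') by a bootstrap/fixed-point argument: minimize the strictly convex unconstrained objective restricted to this ball, show the interior KKT solution is unique, derive the final rate on the restricted problem, and verify a posteriori that the rate is consistent with the lower bound $\delta_{n_\mathrm{min}}\ge K n_\mathrm{min}^{-\beta/4}$ in the hypothesis.

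For Steps (ii)-(iii), using $\mathbbE[\nabla\ell(v^*)]=\boldzero$ from Assumption~\ref{ass0}, I expand
\[
\boldzero=\nabla\ell(\hat v)+\lambda\hat v=\nabla\ell(v^*)+\boldSigma_n(\tilde v)(\hat v-v^*)+\lambda\hat v,
\]
where $\tilde v$ lies on the segment $[v^*,\hat v]$ and $\boldSigma_n(\tilde v)$ is the empirical Hessian operator, and rearrange to
\[
\hat v-v^* = -[\boldSigma_n(\tilde v)+\lambda\boldI]^{-1}\bigl(\nabla\ell(v^*)+\lambda v^*\bigr).
\]
Swapping $\boldSigma_n(\tilde v)$ for the population operator $\boldSigma_{v^*}$ yields a leading bias--variance split plus a perturbation remainder. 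The variance piece $[\boldSigma_{v^*}+\lambda\boldI]^{-1}\nabla\ell(v^*)$ I bound by Bernstein in Hilbert space: because $\|\varphi(\boldx)\|_\mathcal{H}\le 1$ and $\sin,\cos\in[-1,1]$, $\nabla\ell(v^*)$ is an average of bounded zero-mean Hilbert-valued vectors, so $\|\nabla\ell(v^*)\|_\mathcal{H}=O_p(n_\mathrm{min}^{-1/2})$ and the variance term is $O_p(1/(\lambda\sqrt{n_\mathrm{min}}))$. The bias piece $\lambda[\boldSigma_{v^*}+\lambda\boldI]^{-1}v^*$ is handled via Assumption~\ref{ass.2}: writing $v^*=\boldSigma_{v^*}^\beta w$ for some $w\in\mathcal{H}$, spectral calculus gives $\|\lambda(\boldSigma_{v^*}+\lambda\boldI)^{-1}\boldSigma_{v^*}^\beta\|_\mathrm{op}=O(\lambda^\beta)$ for $\beta\in(0,1]$, so the bias is $O(\lambda^\beta)$. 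The Hessian-perturbation remainder is controlled with operator-valued Bernstein bounds on the rank-one summands $\sin\langle v,\varphi\rangle\,\varphi\otimes\varphi$ and $\cos\langle v,\varphi\rangle\,\varphi\otimes\varphi$, using Lipschitzness of $\sin,\cos$ in $\|v-v^*\|_\mathcal{H}$ and the shrinking radius $\delta_{n_\mathrm{min}}$; this is precisely where $B_+,B_-$ enter, and the upper bound $\delta_{n_\mathrm{min}}\le 4/\max(B_+,B_-)$ is what keeps $\boldSigma_n(\tilde v)+\lambda\boldI$ invertible and comparable to $\boldSigma_{v^*}+\lambda\boldI$.

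For Step (iv), substituting $\lambda=T n_\mathrm{min}^{-1/4}$ gives variance $O_p(n_\mathrm{min}^{-1/4})$ and bias $O(n_\mathrm{min}^{-\beta/4})$; since $\beta\in(0,1]$ the bias dominates and the final rate is $O_p(n_\mathrm{min}^{-\beta/4})$. The main obstacle I expect is Step~(i): closing the self-referential loop between ``$\hat v$ is in the $\delta_{n_\mathrm{min}}$-ball'' and ``the KKT condition is interior''. A secondary difficulty is the Hessian-perturbation term, since $\tilde v$ is random and $\boldSigma_n(\tilde v)$ must be compared to both $\boldSigma_n(v^*)$ (via smoothness of $\sin,\cos$) and to $\boldSigma_{v^*}$ (via operator concentration)---this is what forces the rather intricate two-sided condition on $\delta_{n_\mathrm{min}}$ in the statement.
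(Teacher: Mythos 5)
Your overall strategy matches the paper's: localize to a shrinking ball around $v^*$ (the constrained auxiliary problem), do a first-order expansion around $v^*$, decompose the error into a bias term controlled by the range-space assumption, a variance term controlled by Hilbert-space concentration of $\nabla_v\ell(v^*)$, and a Hessian-perturbation remainder that drives the two-sided condition on $\delta_{n_\mathrm{min}}$, then verify a posteriori that the constrained optimizer is interior and hence equals $\hat v$. The bias and variance pieces, the role of $\lambda = T n_{\mathrm{min}}^{-1/4}$, and the use of $B_+,B_-$ to keep the perturbed Hessian comparable to $\boldSigma_{v^*}$ all align with the paper.

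There is, however, one step in your Step (ii) that does not hold as written: the identity
\[
\nabla\ell(\hat v)=\nabla\ell(v^*)+\boldSigma_n(\tilde v)(\hat v-v^*),
\]
with a \emph{single} intermediate $\tilde v$ on the segment, is the mean value theorem for a Hilbert-space-valued map, and that form is false in general (just as the matrix MVT fails coordinatewise in $\mathbbR^d$). The paper sidesteps exactly this by never inverting a Hessian at a random intermediate point: it tests the KKT equation against the fixed vector $\tilde s=(\boldSigma_{v^*}+\lambda\boldI)^{-1}\tilde u$, applies the \emph{scalar} MVT to $g(v)=\langle\tilde s,\nabla_v\ell(v)+\lambda v+2\nu(v-v^*)\rangle$, and obtains the one-sided inequality $\langle\tilde s,-\nabla_v\ell(v^*)-\lambda v^*\rangle\ge\|\tilde u\|^2-a-b$, after which it cancels one power of $\|\tilde u\|$. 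To make your route rigorous you should either replace $\boldSigma_n(\tilde v)$ with the integral remainder $\int_0^1\boldSigma_n\!\left(v^*+t(\hat v-v^*)\right)\mathrm{d}t$, or adopt the paper's scalar-testing trick. Two smaller divergences worth noting: for the empirical-vs-population Hessian piece you propose operator-valued Bernstein, whereas the paper reduces to a scalar uniform law of large numbers over the ball (Rademacher complexity plus Ledoux--Talagrand contraction), which cleanly handles the randomness of $\bar v$ via a $\sup$ over $\mathcal{H}^*$; and for the gradient concentration the paper uses a Hilbert-space Hoeffding inequality (Lemma~\ref{lem.probability.bound}) rather than Bernstein. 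Neither of these affects correctness, but the MVT issue does need the repair above.
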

The proof can be found at Appendix \ref{sec:proof.them.1}.
Theorem \ref{thm:base} shows that, under mild assumptions, $\hat{v}$ is indeed a good estimator for $\atan\left[\frac{p_+(\boldx)}{p_-(\boldx)}\right]$. \red{Some discussions comparing our results with convergence results proved by \citet{Nguyen2008} can be found in Section \ref{sec.comp.conv.results}.  }
Since $\atan\left[\frac{p_+(\boldx)}{p_-(\boldx)}\right]$ is an optimal score that gives rise to $\rockstar$, 
our estimator may have some interesting applications such as outlier detection \cite{Hido2011} or Neyman-Pearson classification \cite{tong13NPclassifier}. We will defer discussions on those applications in future works. 
In the next section, we employ our arctangent likelihood ratio estimator in an application of lower bounding the maximal AUC.

\section{Approximately Lower Bounding the Maximal AUC}
\label{sec.lowerbound.auc}
Finding a score function $t$ that approximately maximizes AUC is an important task in binary classification. 
Let us denote the AUC of $\mathrm{ROC}(t)$ as $\mathrm{AUC}(t)$. It can be seen that \[
\mathrm{AUC}(t) = \int_{[0,1]} \tilde{F}_+(\tilde{F}^{-1}_-(s))\mathrm{d}s =  \mathbbE_{p_-}\mathbbE_{p_+} \left[ \mathbbm{1}\left(t(\boldx_+) \ge t(\boldx_-) \right)\right].\]
Due to the Neyman-Pearson lemma, $\mathrm{ROC}^*$ has the maximum AUC among all ROC curves. Denote the AUC of $\mathrm{ROC}^*$ as $\mathrm{AUC}^*$
Consider the following inequalities: 
\begin{align}
\label{eq.auc.lowerbound}
    \aucstar = \underbrace{\sup_t \mathbbE_{p_-}\mathbbE_{p_+} \left[ \mathbbm{1}\left(t(\boldx^+) \ge t(\boldx^-) \right)\right]}_{(i)} \ge \sup_{t\in \mathcal{F}'} \underbrace{\mathbbE_{p_-} \mathbbE_{p_+} \left[ L\left(t(\boldx^+), t(\boldx^-) \right)\right]}_{(ii)},
\end{align}
where $L(a,b)$ is a continuous and concave lower bound of the indicator function $\mathbbm{1}(a>b)$. 
Due to the Neyman-Pearson lemma,
the supremum of (i) is only attained when $t(\boldx) = \gamma\left(\frac{p_+(\boldx)}{p_-(\boldx)}\right)$ where $\gamma$ is a strictly increasing function. 
Replacing the expectations in (ii) with sample averages yields the optimization problem of AUC maximization \cite{cortes2003,gao13}: 
\begin{align}
\label{eq.auc.max.empirical}
    \max_{t\in \mathcal{F}'} \frac{1}{n_- n_+} \sum_{i=1}^{n_-} \sum_{j=1}^{n_+} L(t(\boldx_j^+), t(\boldx_i^-)).
\end{align}

The objective above is also referred to as Wilcoxon-Mann-Whitney statistic  \citep{hanley1982meaning}.
Therefore, we can see that AUC maximization is a procedure that approximates an optimal score function by maximizing a lower bound of $\aucstar$. 

Now, we show a different way of lower bounding $\aucstar$  
with the help of $\atan{} \frac{p_+(\boldx)}{p_-(\boldx)}$.
We have seen that how $(\tilde{F}_-(\tau), \tilde{F}_+(\tau))$ parameterizes an ROC curve in Section \ref{sec.arclenth}.
In fact, the area between $\rockstar$ and the diagonal line from $(0,0)$ to $(1,1)$ 
can be similarly parameterized by considering ROC curves of positive and negative \emph{mixture  } score distributions. 

\begin{wrapfigure}{r}{0.4\textwidth}
\vspace*{-8mm}
  \begin{center}
    \includegraphics[width=0.4\textwidth]{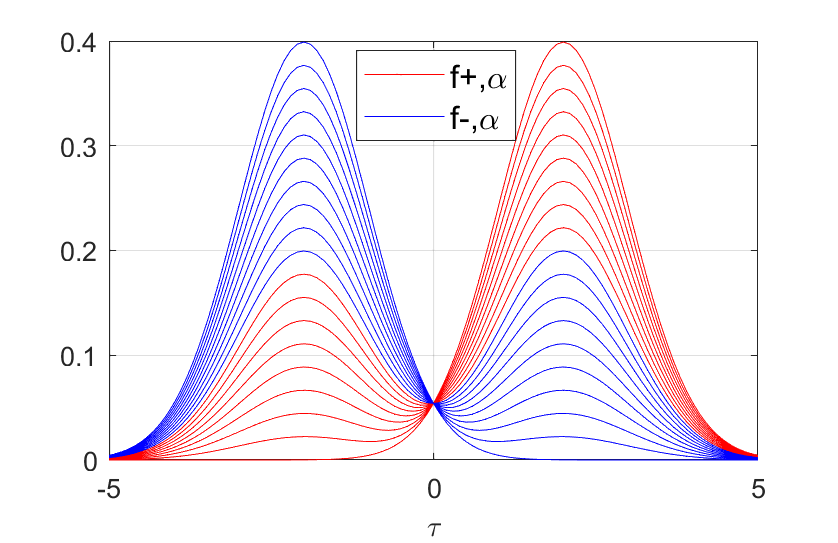}
  \end{center}
\caption{Densities $f^*_+(\tau, \alpha)$ and $f_-^*(\tau, \alpha)$ for $\alpha \in [0,.5]$. }
\vspace*{-5mm}
\label{fig.mixture}
\end{wrapfigure}

Let $F^*_+$ and $F^*_-$ denote CDFs of any optimal score. 
For $\alpha \in [0,.5]$, we can define CDFs of $\alpha$-mixtures of $F^*_+$ and $F^*_-$ as follows
\begin{align*}
    F^*_-(\tau, \alpha) := & (1-\alpha) F^*_-(\tau) + \alpha F^*_+(\tau),\\
    F^*_+(\tau, \alpha) := & \alpha F^*_- (\tau) + (1-\alpha) F^*_+ (\tau).
\end{align*}
Then, FPR ($\tilde{F}^*_-(\tau, \alpha)$) and TPR ($\tilde{F}^*_+(\tau, \alpha)$) for these $\alpha$-mixtures can be defined accordingly. We visualize densities of $F^*_+(\alpha)$ and $F_-^*(\alpha)$ for different $\alpha$ in Figure \ref{fig.mixture}. 

Further, we can see that the 2-dimensional coordinate $\boldr(\tau, \alpha) := (\tilde{F}_-^*(\tau,\alpha), \tilde{F}_+^*(\tau, \alpha))$ parameterizes the area between $\rockstar$ and the diagonal line in $[0,1]^2$:
\begin{itemize}
    \item When fixing $\alpha$ and varying $\tau$, the coordinates give rise to a smooth curve in ROC space from $[0,0]$ to $[1,1]$.
    \begin{itemize}
        \item When $\alpha = 0$, such a curve is $\rockstar$. 
        \item When $\alpha = .5$, such a curve is the diagonal line.
    \end{itemize}
    \item When fixing $\tau = \tau_0$ and varying $\alpha$, the coordinates produce a straight line segment connecting $(\tilde{F}^*_-(\tau_0, 0), \tilde{F}^*_+(\tau_0, 0))$ and $(\tilde{F}^*_-(\tau_0, .5), \tilde{F}^*_+(\tau_0, .5))$.
\end{itemize}
 The left plot in Figure \ref{fig:auc.parameterization} visualizes this parameterization. Now the surface area sandwiched between $\rockstar$ and the diagonal line can be computed using a surface integration:
\begin{figure}
    \centering
     \includegraphics[width=.32\textwidth]{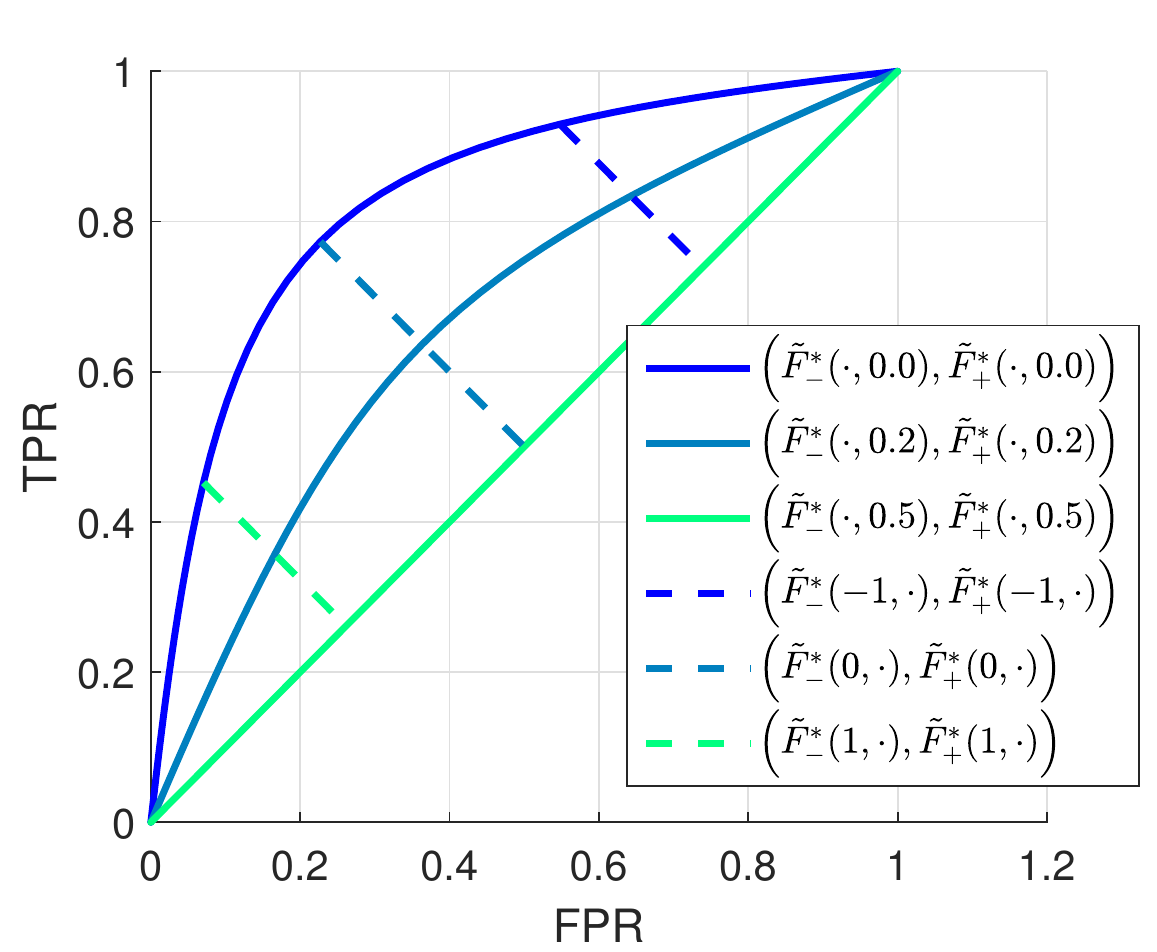}
     \includegraphics[width=.32\textwidth]{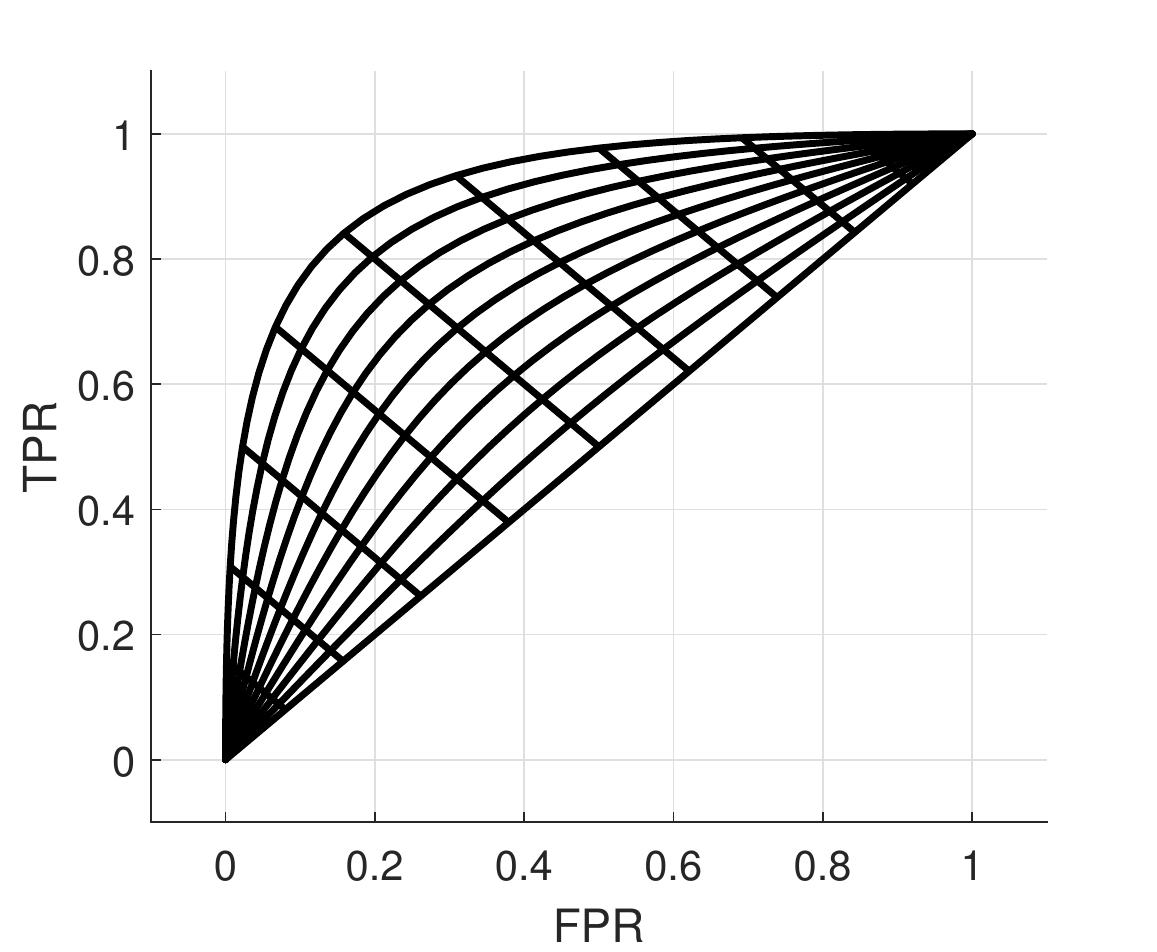}
     \includegraphics[width=.32\textwidth]{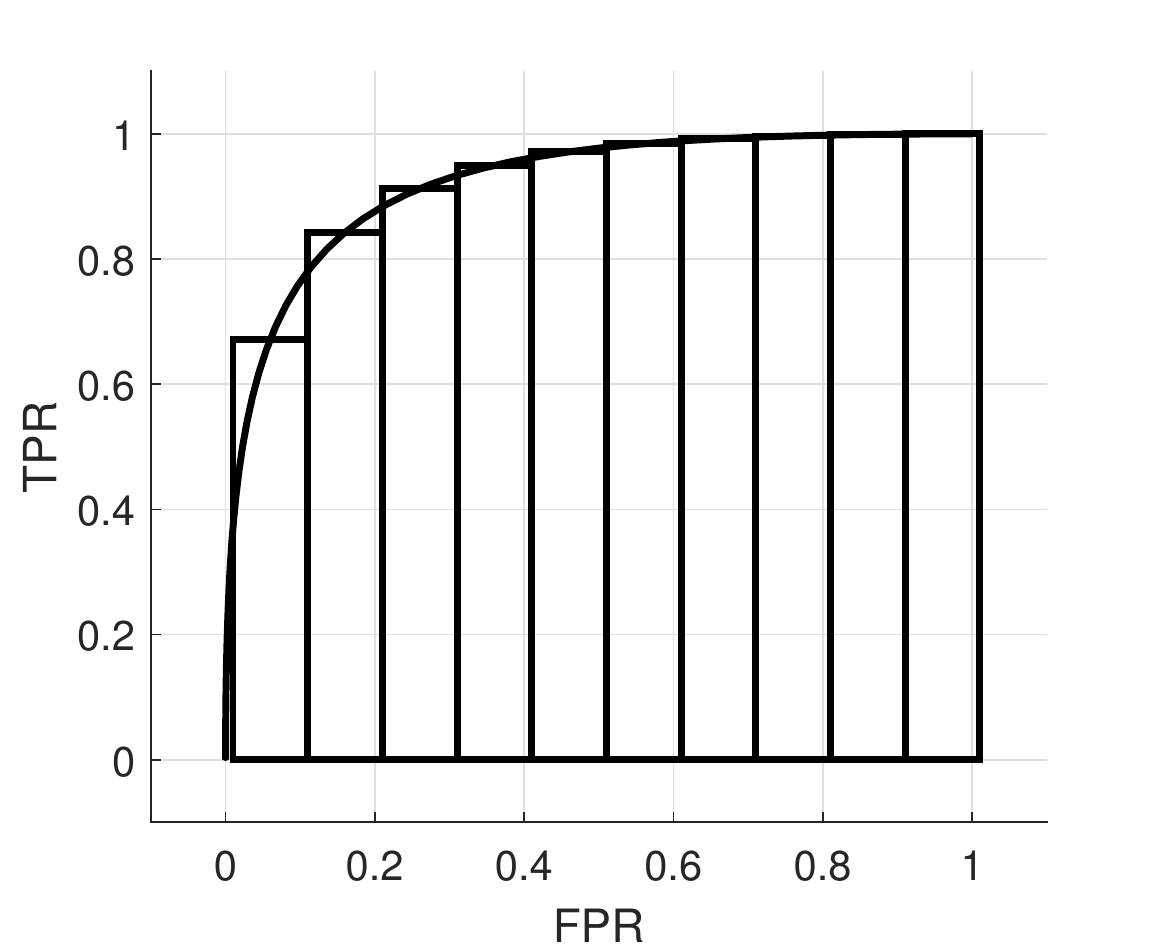}
    \caption{Left: $(\tilde{F}_-^*(\tau,\alpha), \tilde{F}_+^*(\tau, \alpha))$ parameterizes the surface between $\rockstar$ and the diagonal line in $[0,1]^2$. This plot is created by setting $p_+ = \mathcal{N}(1,1)$, $p_- = \mathcal{N}(-1,1)$ and $t^*(x) = \frac{1}{2}\log \frac{p_+(x)}{p_-(x)} = x$. 
    Middle: Our parameterization ``mesh'' divides AUC into surface elements (small patches on the plot). Right:
    Wilcoxon-Mann-Whitney statistic divides AUC into histogram bars. }
    \label{fig:auc.parameterization}
\end{figure}
\begin{align}
\label{eq.auc}
\aucstar - .5 = \int_{\mathrm{dom}(\tau)} \int_{[0,.5]}  \|\partial_\tau \boldr(\tau, \alpha) \times \partial_\alpha \boldr(\tau, \alpha)\|  \ \mathrm{d}\alpha \mathrm{d} \tau,    
\end{align}
    where $\times$ denotes the cross product. 
After some algebra and applying the Fenchel duality technique in Section \ref{sec.fench}, we prove that $\aucstar$ can be expressed as the supremum of a variational objective similar to \eqref{eq.roc.length}:
\begin{prop}
\label{prop.auc.star}
    $\mathrm{AUC}^* = \frac{\sqrt{2} A}{2}+\frac{1}{2}$, 
    \begin{align}
    \label{eq.weighted.arclength}
A := \sup_{v\in [0,\pi/2]} \mathbbE_{p_+} \left[ w\left(\atan{} \frac{p_+(\boldx)}{p_-(\boldx)}\right) \sin[v(\boldx)] \right] + \mathbbE_{p_-} \left[ w\left(\atan{} \frac{p_+(\boldx)}{p_-(\boldx)}\right) \cos[v(\boldx)] \right], 
    \end{align}
    where $w(\tau) :=\sin(\tau + \frac{\pi}{4}) \cdot |F^*_+(\tau) - F^*_-(\tau)|$.
    The supremum of \eqref{eq.weighted.arclength} is attained at $v^* = \atan \frac{p_+}{p_-}$. 
\end{prop}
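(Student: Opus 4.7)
The plan is to start from the surface integral formula in \eqref{eq.auc} and convert it into a variational objective by mimicking the Fenchel dualization used in Section \ref{sec.fench}, with the key ingredient being the defining identity \eqref{eq.ratiosratioisratio} of the optimal score.

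First, I would compute the partial derivatives of $\boldr(\tau,\alpha)=(\tilde{F}_-^*(\tau,\alpha),\tilde{F}_+^*(\tau,\alpha))$ directly from the definitions of the $\alpha$-mixture CDFs. Writing $D(\tau):=F_+^*(\tau)-F_-^*(\tau)$, one obtains $\partial_\tau\tilde{F}_-^*=-(1-\alpha)f_-^*-\alpha f_+^*$, $\partial_\tau\tilde{F}_+^*=-\alpha f_-^*-(1-\alpha)f_+^*$, $\partial_\alpha\tilde{F}_-^*=-D$, and $\partial_\alpha\tilde{F}_+^*=D$. The two-dimensional cross-product magnitude collapses because the $\alpha$-terms add to $1$, yielding $\|\partial_\tau\boldr\times\partial_\alpha\boldr\|=|D(\tau)|\bigl(f_+^*(\tau)+f_-^*(\tau)\bigr)$. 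This is independent of $\alpha$, so integrating over $\alpha\in[0,0.5]$ contributes a factor of $1/2$:
\begin{equation*}
\aucstar-\tfrac{1}{2}=\tfrac{1}{2}\int|F_+^*(\tau)-F_-^*(\tau)|\bigl(f_+^*(\tau)+f_-^*(\tau)\bigr)\,\mathrm{d}\tau.
\end{equation*}
Now I would invoke the optimality of the score: specializing \eqref{eq.ratiosratioisratio} to $\gamma=\atannospace$ gives $\tan\tau=f_+^*(\tau)/f_-^*(\tau)$ for every $\tau$ in the range of $v^*$, hence $\sin\tau=f_+^*/\sqrt{(f_+^*)^2+(f_-^*)^2}$ and $\cos\tau=f_-^*/\sqrt{(f_+^*)^2+(f_-^*)^2}$. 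Combined with $\sin(\tau+\pi/4)=(\sin\tau+\cos\tau)/\sqrt{2}$, this produces the clean identity $f_+^*(\tau)+f_-^*(\tau)=\sqrt{2}\,\sin(\tau+\pi/4)\sqrt{(f_+^*(\tau))^2+(f_-^*(\tau))^2}$, which when substituted reduces the desired claim to
\begin{equation*}
A=\int w(\tau)\sqrt{(f_+^*(\tau))^2+(f_-^*(\tau))^2}\,\mathrm{d}\tau.
\end{equation*}

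For the variational form I would apply the elementary identity $\sqrt{a^2+b^2}=\sup_{u\in[0,\pi/2]}[a\sin u+b\cos u]$ (attained at $u=\atannospace(a/b)$), which is precisely the convex-conjugate identity driving Section \ref{sec.fench}, now used pointwise with $a=f_+^*(\tau),b=f_-^*(\tau)$. Because $w\ge 0$, the pointwise supremum passes outside the $\tau$-integral, and rewriting the integrals as expectations through $\int h(\tau)f_\pm^*(\tau)\,\mathrm{d}\tau=\mathbbE_{p_\pm}[h(v^*(\boldx))]$ gives the objective in \eqref{eq.weighted.arclength}, except that $v$ is restricted to be a function of $\tau=v^*(\boldx)$. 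Enlarging the feasible set to arbitrary measurable $v:\mathcal{X}\to[0,\pi/2]$ could only increase the supremum, but the pointwise-in-$\boldx$ maximizer of the (nonnegatively weighted) integrand is $v(\boldx)=\atannospace(p_+(\boldx)/p_-(\boldx))=v^*(\boldx)$, which already depends on $\boldx$ only through $\tau$; thus the two suprema coincide and are attained at $v^*$, giving both the identity and the statement about the maximizer.

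The main obstacle, as I see it, is packaging the optimal-score identity $\tan\tau=f_+^*/f_-^*$ correctly: the trigonometric collapse that produces precisely $w(\tau)=\sin(\tau+\pi/4)|F_+^*-F_-^*|$ works only because we are parameterizing the area under $\rockstar$ rather than under an arbitrary ROC, and it is the same structural fact that forces the pointwise-optimal $v$ to depend on $\boldx$ only through $v^*(\boldx)$ and so legitimizes the enlargement of the feasible class. Aside from that, the proof is a direct application of the Fenchel-duality recipe already established in Section \ref{sec.fench}, reweighted by the nonnegative factor $w$.
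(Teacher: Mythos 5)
Your proposal is correct and follows essentially the same structure as the paper's proof: parameterize the surface, compute the cross product, integrate over $\alpha$ to get the $1/2$ factor, identify the weight $w$ via the optimal-score identity, and then invoke the same Fenchel-type identity $\sqrt{a^2+b^2}=\sup_{u\in[0,\pi/2]}(a\sin u+b\cos u)$ pointwise. The one cosmetic difference is that you obtain the $\sin(\tau+\pi/4)$ factor algebraically from $\tan\tau=f_+^*/f_-^*$ and the angle-addition formula, whereas the paper keeps the cross product in the form $\|\bolda\|\,\|\boldb\|\sin\theta$ and identifies $\theta=\atan(p_+/p_-)+\pi/4$ geometrically (tangent of $\rockstar$ versus the diagonal direction); you also spell out more carefully than the paper does why passing the supremum through the integral is legitimate (nonnegativity of $w$) and why enlarging from functions of $\tau$ to arbitrary measurable functions of $\boldx$ does not change the supremum, which is a worthwhile clarification.
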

The proof can be found in Appendix \ref{sec.proof.prop.auc} in the supplementary material. 
A lower bound of $A$ can be obtained by restricting $v$ to a function class. 

Evaluating $w$ requires us to evaluate $\atan \frac{p_+(\boldx)}{p_-(\boldx)}$, $F^*_+$ and $F^*_-$ which are not readily available. However, Section \ref{sec.theory} shows that the empirical estimator \eqref{eq:obj_rkhs} is a consistent estimator of $\atan \frac{p_+(\boldx)}{p_-(\boldx)}$ under mild conditions. Therefore, we propose the following two-step procedure to approximately lowerbound $A$:
\begin{algorithm}
\caption{Two-step Procedure for Approximately Lower Bounding $A$}\label{alg:lowerbound}
\begin{enumerate}
    \item Obtain 
    $\hat{t}(\boldx) := \langle \hat{v},\phi(\boldx)\rangle$  using \eqref{eq:obj_rkhs}.
    Approximate $F^*_+$ and $F^*_-$ using $\hat{F}_+$ and $\hat{F}_-$ which are empirical CDFs of $\hat{t}(\boldx^+)$ and $\hat{t}(\boldx^-)$.
    \item Optimize the empirical version of \eqref{eq.weighted.arclength} by restricting $v$ to a feasible function class and plugging in estimates 
    obtained in the earlier step, i.e.,
    \begin{align}
    \label{eq.auc.maximizer}
        \hathat{v} :=
        \argmax_{v\in [0,\pi/2], v \in \mathcal{F}} \frac{1}{n_+} \sum_{i=1}^{n_+}  \hat{w}\left[ \hat{t}(\boldx_i^+)\right] \cdot \sin[v(\boldx^+_i)] 
        + \frac{1}{n_-} \sum_{i=1}^{n_-} \hat{w}\left[\hat{t}(\boldx_i^-)\right] \cdot \cos[v(\boldx^-_i)],
    \end{align}
    where $\hat{w}(\tau) := \sin(\tau + \frac{\pi}{4}) \left|\hat{F}_+(\tau) - \hat{F}_-(\tau)\right|$.
\end{enumerate}
\end{algorithm}

Note that \eqref{eq.auc.maximizer} is nothing but a weighted sample objective \eqref{eq.sample.obj}. Thus, it can be easily optimized by the algorithm that solves a weighted version of \eqref{eq.sample.obj} given the approximated weights in the first step. In practice, we simply run the solver for \eqref{eq:obj_rkhs} twice: The first time we run it without weights then run it again with weights $\hat{w}\left[ \hat{t}(\boldx_i)\right]$ calculated from the first run. 

Since the above algorithm also approximates an optimal score ($\atan \frac{p_+}{p_-}$) by maximizing an approximated lower bound of $\aucstar$, it is natural to wonder how the maximizer
$\hathat{v}$
 of \eqref{eq.auc.maximizer} would perform in AUC maximization tasks. In the next section, we show that our two-step algorithm achieves a promising AUC performance compared to a state of the art AUC maximizer. 

Computing $\aucstar$ using \eqref{eq.auc} is different from using Wilcoxon-Mann-Whitney statistic (i.e., \eqref{eq.auc.maximizer}): Our approach divides the space between $\rockstar$ and the diagonal into small surface elements and then adds them up. Wilcoxon-Mann-Whitney statistic adds up all histogram bars, which are TPRs at different FPR levels. Our approach requires $\tilde{F}_+$ and $\tilde{F}_-$ to be differentiable with respect to $\tau$, which means the score distributions cannot be discrete. 
However, Wilcoxon-Mann-Whitney can compute the AUC of discrete score distributions without a problem. This difference is visualized in the middle and right plots of Figure \ref{fig:auc.parameterization}. 


\section{Experiments}
\subsection{Numerical Comparison of  Divergences and $\mathrm{TV}$ Bounds}
\label{sec.boundcomparisons}
\begin{wrapfigure}{r}{0.4\textwidth}
\centering
  \centering
  \includegraphics[width=.97\linewidth]{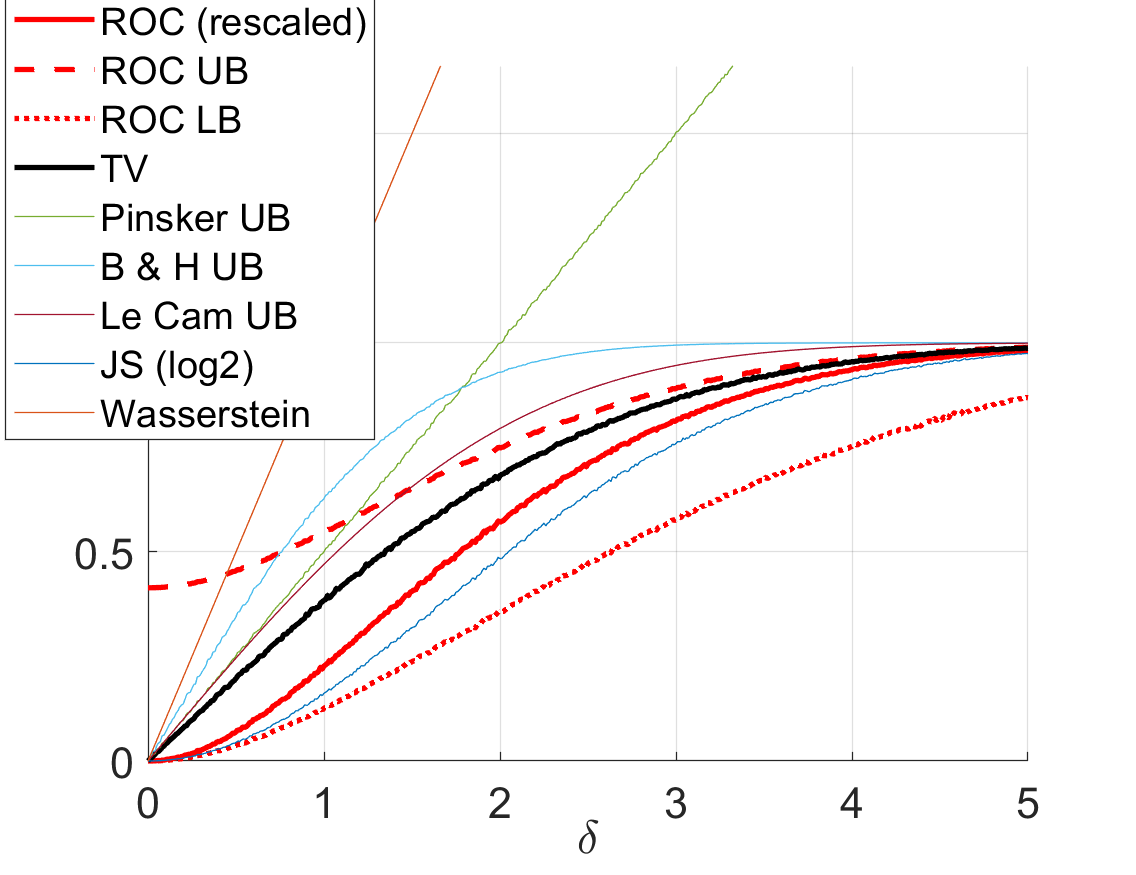}
  \captionof{figure}{\red{Comparison of various divergences and bounds of $\mathrm{TV}$.
  }}
  \label{fig:bound}
\end{wrapfigure}

\red{In this experiment, we numerically compare the ROC divergence, the upper and lower bound in Proposition \ref{thm:prop.TVbound} with several other divergences and some known bounds of $\mathrm{TV}$ in Figure \ref{fig:bound}. In this numerical simulation, $p_+ = \mathcal{N}(0,1)$ and $p_- = \mathcal{N}(\delta,1)$. 
We plot ROC divergence, Jensen Shannon divergence, Wasserstein distance and $\mathrm{TV}$ between $p_+$ and $p_-$ as $\delta$ grows from 0 to 5. We can see the (rescaled) ROC divergence closely resembles $\mathrm{TV}$. When $\delta > 1.45$, the upper bound given in Proposition \ref{thm:prop.TVbound} is the tightest among known TV upper bounds \citep{canonne2022short,devroye2018total} (Pinsker's upperbound,  Bretagnolle \& Huber's upper bound, Le Cam's upper bound). This suggests that combining our upperbound with existing bounds may produce an even tighter bound for $\mathrm{TV}$. } 

\subsection{Imbalanced Classification on CIFAR-10}
\label{sec.exp}
\begin{figure}[t]
    \centering
    \includegraphics[width=.99\textwidth]{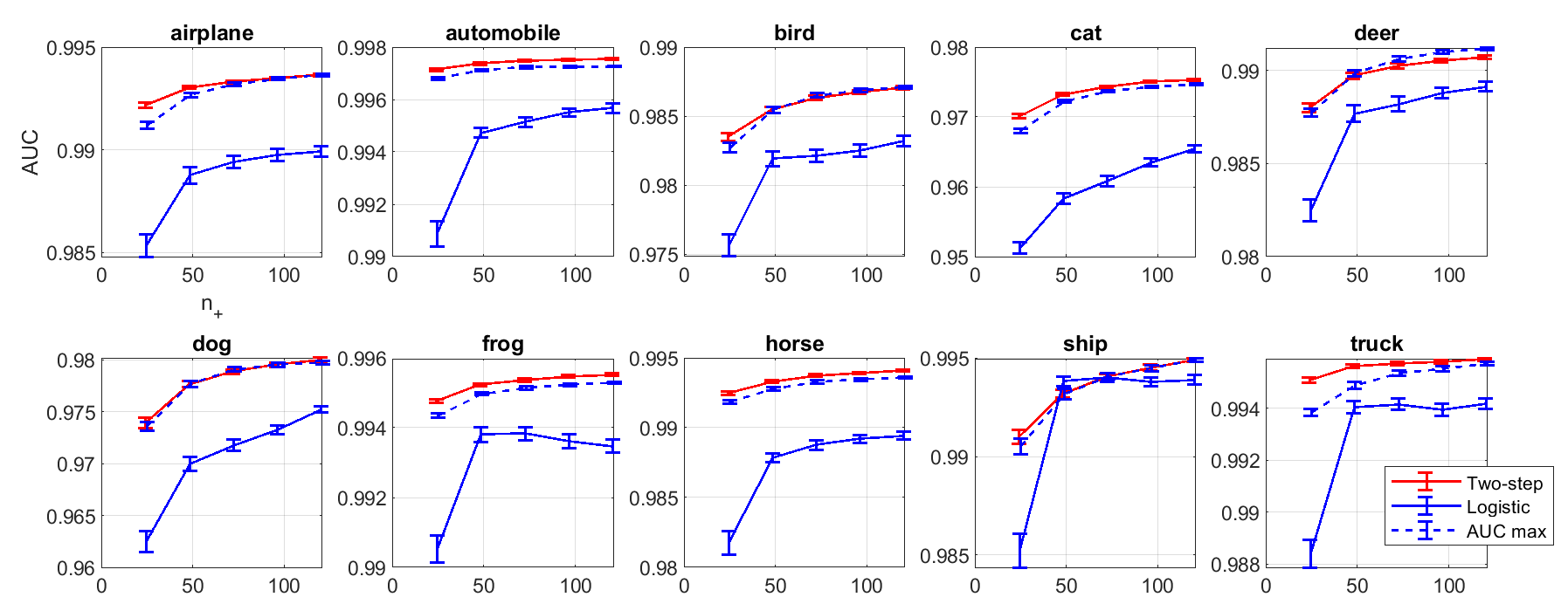}
    \caption{Testing AUC of one-versus-the-rest classification on CIFAR-10 dataset.}
    \label{fig:cifar10}
\end{figure}

In this section, we test if the $\hathat{v}$ obtained in our two-step procedure \eqref{eq.auc.maximizer} is indeed a good score function in terms of AUC in imbalanced classification tasks. We use a widely known image classification dataset CIFAR-10 \cite{krizhevsky2009learning}.   
The performance is compared with an AUC maximizer which maximizes the empirical lower bound in \eqref{eq.auc.max.empirical} and a vanilla logistic regression classifier. We set \red{the surrogate loss} $L(a,b):= -(1-(a-b))^2$ in the AUC maximizer, as suggested in \cite{gao13}. All methods use linear models with no regularization terms since our models are simple and we have sufficient samples. Particularly, the $\hat{t}$ and $\hathat{v}$ in our two-step algorithm is obtained using \eqref{eq:obj_rkhs} by setting $\varphi(\boldx) = \boldx$ and $\lambda = 0$. \red{The AUC maximization (AUC max) is implemented using SPAUC method \cite{lei2021stochastic}}. 

Instead of using the raw features, we extract 50 dimensional bounded features by training a residual network \cite{he2016deep} on the training dataset using the 10-class cross entropy loss. The structure of the network is included in the supplementary material. 
After obtaining features, we construct datasets for 10 different one-versus-the-rest classification tasks. 
For a single task, we pick a class and 
obtain $X_+$ by randomly sampling from this  class $n_+$ times in the training set. Similarly, $X_-$ is obtained by randomly sampling from the rest of the classes $n_-$ times. 
In our experiments, we set $n_+ = 24, 48, 72, 96, 120$ and fix $n_- = 1000$ to create imbalanced positive and negative datasets. We run all three methods and obtain the corresponding score functions. For each class, we repeat the experiment 96 times using different random samples. We use the testing and training split provided by the dataset itself. 

Our experiments can be seen as a transfer learning task which reuses predictive features trained for a multi-class classifier for one-versus-the-rest binary classification tasks. 

The average AUCs computed on the testing dataset and their standard errors over 96 runs over different $n_+$ sample sizes are shown in Figure \ref{fig:cifar10}. Our method has approximately equal performance with the AUC maximizer despite not directly maximizing the AUC. This observation indicates that $\hathat{v}$ can be a good score function in AUC maximization tasks. Both of the methods significantly outperform vanilla logistic regression. 

\subsection{Discussions on Computational Complexity}
\label{sec.comp.complexity}
Without loss of generality, assume $n_+ = n_- = n$. The naive caclulation of objective \eqref{eq.auc.max.empirical} has a computational complexity $O(n^2)$ since we evaluate the loss function $L$ at each pair of samples. 
However, authors in \cite{joachims2005support} have shown that the objective function in \eqref{eq.auc.max.empirical} can also be computed with $O(n\log(n))$ complexity for hinge loss (and decomposable loss functions). 
A recent work \cite{yingSOAUC} simplifies the computation of \eqref{eq.auc.max.empirical} for the squared loss function $L$ with an unbiased estimate. Suppose $t(\boldx) := \langle v, \boldx \rangle$, then the \emph{negative} objective of \eqref{eq.auc.max.empirical} is an unbiased estimate of
\begin{align}
\label{eq.ying}
    1 + & \mathrm{Var}_{p_+}[\langle v, \boldx \rangle] + \mathrm{Var}_{p_-}[\langle v, \boldx \rangle] + 2 \langle v, \mathbbE_{p_-}[\boldx] -  \mathbbE_{p_+}[\boldx] \rangle + \langle v, \mathbbE_{p_-}[\boldx] -  \mathbbE_{p_+}[\boldx] \rangle^2.
\end{align}
After we approximating \eqref{eq.ying} with empirical terms, the computation can be done with a complexity $O(n)$. When implemented in an online fashion, it has a computational complexity of one datum.  

\red{In comparison, the objective \eqref{eq.sample.obj} and \eqref{eq.auc.maximizer} are summation of $\sin/\cos(v(\boldx))$ evaluated at each datum, so computing the objective/gradient has a computational complexity $O(n)$. } 
Computing $\hat{F}_+$ and $\hat{F}_-$ requires sorting our dataset, which has an average complexity $O(n\log n)$. However, once our datasets are sorted, 
$\hat{F}_+\left(\hat{t}(\boldx_0)\right) = \frac{i}{n_+}$, where $i$ is the index of $\hat{t}(\boldx_0)$ in the sorted set $\{\hat{t}(\boldx_i)\}_{i=1}^{n_+}$. 

\section{Conclusions}
In this paper, we show that a novel $f$-divergence arises from the arc length of the optimal ROC curve. The arc length can be accurately estimated from positive and negative samples using a variational expression. It is also an estimator for $\atan{p_+/p_-}$ and has a convergence rate $O_p(n^{-\beta/4})$.  Finally, we show that the area between the optimal ROC curve and the diagonal can be parameterized  using a similar variational objective. It leads to a two-step procedure that approximately lower bounds the maximal AUC which achieves a promising result in AUC maximization tasks. 

\begin{ack}
The author would like to thank Prof. Peter Flach and Dr. Hao Song for their helpful discussions. 
The author would like to thank four anonymous reviewers for their insightful comments. In particular, we thank Reviewer WSBr and the Area Chair for pointing out the computational complexity inaccuracies in our initial version. 
\end{ack}

\bibliographystyle{plainnat}
\bibliography{main}

\begin{thebibliography}{39}
\providecommand{\natexlab}[1]{#1}
\providecommand{\url}[1]{\texttt{#1}}
\expandafter\ifx\csname urlstyle\endcsname\relax
  \providecommand{\doi}[1]{doi: #1}\else
  \providecommand{\doi}{doi: \begingroup \urlstyle{rm}\Url}\fi

\bibitem[Blei et~al.(2017)Blei, Kucukelbir, and McAuliffe]{Blei2017}
D.~M. Blei, A.~Kucukelbir, and J.~D. McAuliffe.
\newblock Variational inference: A review for statisticians.
\newblock \emph{Journal of the American Statistical Association}, 112\penalty0
  (518):\penalty0 859--877, 2017.

\bibitem[Canonne(2022)]{canonne2022short}
C.~Canonne.
\newblock A short note on an inequality between kl and tv.
\newblock \emph{arXiv:2202.07198}, 2022.

\bibitem[Cortes and Mohri(2003)]{cortes2003}
C.~Cortes and M.~Mohri.
\newblock Auc optimization vs. error rate minimization.
\newblock In \emph{Advances in Neural Information Processing Systems 16
  (NeurIPS 2003)}, volume~16, 2003.

\bibitem[Devroye et~al.(2018)Devroye, Mehrabian, and Reddad]{devroye2018total}
L.~Devroye, A.~Mehrabian, and T.~Reddad.
\newblock The total variation distance between high-dimensional gaussians.
\newblock \emph{arXiv:1810.08693}, 2018.

\bibitem[Edwards and Metz(2007)]{edwards2007utility}
D.~C. Edwards and C.~E. Metz.
\newblock A utility-based performance metric for roc analysis of $n$-class
  classification tasks.
\newblock In \emph{Medical Imaging 2007: Image Perception, Observer
  Performance, and Technology Assessment}, volume 6515, pages 21 -- 30. SPIE,
  2007.

\bibitem[Edwards and Metz(2008)]{edwards2008optimality}
D.~C. Edwards and C.~E. Metz.
\newblock Optimality of a utility-based performance metric for roc analysis.
\newblock In \emph{Medical Imaging 2008: Image Perception, Observer
  Performance, and Technology Assessment}, volume 6917, pages 122 -- 127. SPIE,
  2008.

\bibitem[Eguchi and Copas(2002)]{eguchi2002class}
S.~Eguchi and J.~Copas.
\newblock A class of logistic-type discriminant functions.
\newblock \emph{Biometrika}, 89\penalty0 (1):\penalty0 1--22, 2002.

\bibitem[Fawcett(2006)]{Fawcett2006}
T.~Fawcett.
\newblock An introduction to roc analysis.
\newblock \emph{Pattern Recognition Letters}, 27\penalty0 (8):\penalty0
  861--874, 2006.

\bibitem[Flach(2016)]{flach2016roc}
P.~A. Flach.
\newblock Roc analysis.
\newblock In \emph{Encyclopedia of machine learning and data mining}, pages
  1--8. Springer, 2016.

\bibitem[Fukumizu(2009)]{fukumizu_2009}
K.~Fukumizu.
\newblock \emph{Exponential manifold by reproducing kernel Hilbert spaces},
  page 291–306.
\newblock Cambridge University Press, 2009.

\bibitem[Fukumizu et~al.(2013)Fukumizu, Song, and Gretton]{fukumizu2013kernel}
K.~Fukumizu, L.~Song, and A.~Gretton.
\newblock Kernel bayes' rule: Bayesian inference with positive definite
  kernels.
\newblock \emph{The Journal of Machine Learning Research}, 14\penalty0
  (1):\penalty0 3753--3783, 2013.

\bibitem[Gao et~al.(2013)Gao, Jin, Zhu, and Zhou]{gao13}
W.~Gao, R.~Jin, S.~Zhu, and Z-H Zhou.
\newblock One-pass auc optimization.
\newblock In \emph{Proceedings of the 30th International Conference on Machine
  Learning (ICML 2013)}, volume~28, pages 906--914, 2013.

\bibitem[Goodenough et~al.(1974)Goodenough, Rossmann, and
  Lusted]{goodenough1974radiographic}
D.~J. Goodenough, K.~Rossmann, and L.~B Lusted.
\newblock Radiographic applications of receiver operating characteristic (roc)
  curves.
\newblock \emph{Radiology}, 110\penalty0 (1):\penalty0 89--95, 1974.

\bibitem[Goodfellow et~al.(2014)Goodfellow, Pouget-Abadie, Mirza, Xu,
  Warde-Farley, Ozair, Courville, and Bengio]{Goodfellow2014}
I.~Goodfellow, J.~Pouget-Abadie, M.~Mirza, B.~Xu, D.~Warde-Farley, S.~Ozair,
  A.~Courville, and Y.~Bengio.
\newblock Generative adversarial nets.
\newblock In \emph{Advances in Neural Information Processing Systems 27
  (NeurIPS 2014)}, pages 2672--2680, 2014.

\bibitem[Green and Swets(1966)]{green1966signal}
D.~M. Green and J.~A. Swets.
\newblock \emph{Signal detection theory and psychophysics}, volume~1.
\newblock Wiley New York, 1966.

\bibitem[Hanley and McNeil(1982)]{hanley1982meaning}
J.~A. Hanley and B.~J. McNeil.
\newblock The meaning and use of the area under a receiver operating
  characteristic (roc) curve.
\newblock \emph{Radiology}, 143\penalty0 (1):\penalty0 29--36, 1982.

\bibitem[He et~al.(2016)He, Zhang, Ren, and Sun]{he2016deep}
K.~He, X.~Zhang, S.~Ren, and J.~Sun.
\newblock Deep residual learning for image recognition.
\newblock In \emph{Proceedings of the IEEE conference on computer vision and
  pattern recognition}, pages 770--778, 2016.

\bibitem[Hermans et~al.(2020)Hermans, Begy, and Louppe]{hermans2020likelihood}
J.~Hermans, V.~Begy, and G.~Louppe.
\newblock Likelihood-free {MCMC} with amortized approximate ratio estimators.
\newblock In \emph{Proceedings of the 37th International Conference on Machine
  Learning (ICML2020)}, volume 119, pages 4239--4248, 2020.

\bibitem[Hido et~al.(2011)Hido, Tsuboi, Kashima, Sugiyama, and
  Kanamori]{Hido2011}
S.~Hido, Y.~Tsuboi, H.~Kashima, M.~Sugiyama, and T.~Kanamori.
\newblock Statistical outlier detection using direct density ratio estimation.
\newblock \emph{Knowledge and information systems}, 26\penalty0 (2):\penalty0
  309--336, 2011.

\bibitem[Hiriart-Urruty and Lemar{\'e}chal(2004)]{hiriart2004fundamentals}
J.B. Hiriart-Urruty and C.~Lemar{\'e}chal.
\newblock \emph{Fundamentals of convex analysis}.
\newblock Springer Science \& Business Media, 2004.

\bibitem[Joachims(2005)]{joachims2005support}
T.~Joachims.
\newblock A support vector method for multivariate performance measures.
\newblock In \emph{Proceedings of the 22nd international conference on Machine
  learning (ICML)}, pages 377--384, 2005.

\bibitem[Krizhevsky(2009)]{krizhevsky2009learning}
A.~Krizhevsky.
\newblock Learning multiple layers of features from tiny images.
\newblock 2009.
\newblock techincal report.

\bibitem[Kullback and Leibler(1951)]{Kullback1951}
S.~Kullback and R.~A. Leibler.
\newblock On information and sufficiency.
\newblock \emph{Annals of Mathematical Statistics}, 22:\penalty0 79--86, 1951.

\bibitem[Lei and Ying(2021)]{lei2021stochastic}
Y.~Lei and Y.~Ying.
\newblock Stochastic proximal auc maximization.
\newblock \emph{Journal of Machine Learning Research}, 22\penalty0
  (61):\penalty0 1--45, 2021.

\bibitem[Lin et~al.(2018)Lin, Khetan, Fanti, and Oh]{Lin2018}
Z.~Lin, A.~Khetan, G.~Fanti, and S.~Oh.
\newblock Pacgan: The power of two samples in generative adversarial networks.
\newblock In \emph{Advances in Neural Information Processing Systems 31
  (NeurIPS 2018)}, volume~31, 2018.

\bibitem[Lusted(1971)]{lusted1971signal}
L.~B. Lusted.
\newblock Signal detectability and medical decision-making.
\newblock \emph{Science}, 171\penalty0 (3977):\penalty0 1217--1219, 1971.

\bibitem[Neyman and Pearson(1933)]{NeymanPearson1933}
J.~Neyman and E.~S. Pearson.
\newblock On the problem of the most efficient tests of statistical hypotheses.
\newblock \emph{Philosophical Transactions of the Royal Society of London.
  Series A, Containing Papers of a Mathematical or Physical Character},
  231:\penalty0 289--337, 1933.

\bibitem[Nguyen et~al.(2008)Nguyen, Wainwright, and Jordan]{Nguyen2008}
X.~Nguyen, M.~J. Wainwright, and M.~I. Jordan.
\newblock Estimating divergence functionals and the likelihood ratio by
  penalized convex risk minimization.
\newblock In \emph{Advances in Neural Information Processing Systems 20}, pages
  1089--1096. 2008.

\bibitem[Nguyen et~al.(2010)Nguyen, Wainwright, and Jordan]{Nguyen2010}
X.~Nguyen, M.~J. Wainwright, and M.~I. Jordan.
\newblock Estimating divergence functionals and the likelihood ratio by convex
  risk minimization.
\newblock \emph{IEEE Transactions on Information Theory}, 56\penalty0
  (11):\penalty0 5847--5861, 2010.

\bibitem[Nowozin et~al.(2016)Nowozin, Cseke, and Tomioka]{Nowozin2016}
S.~Nowozin, B.~Cseke, and R.~Tomioka.
\newblock f-gan: Training generative neural samplers using variational
  divergence minimization.
\newblock In \emph{Advances in Neural Information Processing Systems 29
  (NeurIPS 2016)}, pages 271--279, 2016.

\bibitem[Reid and Williamson(2011)]{reid11a}
M.~D. Reid and R.~C. Williamson.
\newblock Information, divergence and risk for binary experiments.
\newblock \emph{Journal of Machine Learning Research}, 12\penalty0
  (22):\penalty0 731--817, 2011.

\bibitem[Rosasco et~al.(2010)Rosasco, Belkin, and De~Vito]{rosasco2010learning}
L.~Rosasco, M.~Belkin, and E.~De~Vito.
\newblock On learning with integral operators.
\newblock \emph{Journal of Machine Learning Research}, 11\penalty0 (2), 2010.

\bibitem[Scholkopf and Smola(2001)]{Scholkopf2001}
B.~Scholkopf and A.~J. Smola.
\newblock \emph{Learning with kernels: support vector machines, regularization,
  optimization, and beyond}.
\newblock MIT press, 2001.

\bibitem[Sriperumbudur et~al.(2012)Sriperumbudur, Fukumizu, Gretton,
  Sch{\"o}lkopf, and Lanckriet]{sriperumbudur2012empirical}
B.~K. Sriperumbudur, K.~Fukumizu, A.~Gretton, B.~Sch{\"o}lkopf, and G.~RG
  Lanckriet.
\newblock On the empirical estimation of integral probability metrics.
\newblock \emph{Electronic Journal of Statistics}, 6:\penalty0 1550--1599,
  2012.

\bibitem[Sriperumbudur et~al.(2017)Sriperumbudur, Fukumizu, Gretton,
  Hyv\"{a}rinen, and Kumar]{Sriperumbudur17}
B.~K. Sriperumbudur, K.~Fukumizu, A.~Gretton, A.~Hyv\"{a}rinen, and R.~Kumar.
\newblock Density estimation in infinite dimensional exponential families.
\newblock \emph{Journal of Machine Learning Research}, 18\penalty0
  (57):\penalty0 1--59, 2017.

\bibitem[Steinwart et~al.(2009)Steinwart, Hush, and
  Scovel]{steinwart2009optimal}
I.~Steinwart, D.~R. Hush, and C.~Scovel.
\newblock Optimal rates for regularized least squares regression.
\newblock In \emph{Proceedings of Conference on Learning Theory (COLT) 2009},
  pages 79--93, 2009.

\bibitem[Tong(2013)]{tong13NPclassifier}
X.~Tong.
\newblock A plug-in approach to neyman-pearson classification.
\newblock \emph{Journal of Machine Learning Research}, 14\penalty0
  (56):\penalty0 3011--3040, 2013.

\bibitem[Wainwright(2019)]{wainwright_2019}
M.~J. Wainwright.
\newblock \emph{High-Dimensional Statistics: A Non-Asymptotic Viewpoint}.
\newblock Cambridge University Press, 2019.

\bibitem[Ying et~al.(2016)Ying, Wen, and Lyu]{yingSOAUC}
Y.~Ying, L.~Wen, and S.~Lyu.
\newblock Stochastic online auc maximization.
\newblock In \emph{Advances in Neural Information Processing Systems 29
  (NeurIPS 2016)}, volume~29, 2016.

\end{thebibliography}

\section*{Checklist}

\begin{enumerate}

\item For all authors...
\begin{enumerate}
  \item Do the main claims made in the abstract and introduction accurately reflect the paper's contributions and scope?
  
    \answerYes{In introduction, we discussed the background/usages ROC curves, $f$-divergence estimation and provided a full paragraph summary of the contribution of this paper. We give the same contribution rundown in the abstract as well. }
  \item Did you describe the limitations of your work?
  
    \answerYes{In Section \ref{sec.lowerbound.auc}, we included a brief discussion comparing the proposed maximal AUC lower bounding approach vs. the classic Wilcoxon-Mann-Whitney statistic.}
  \item Did you discuss any potential negative societal impacts of your work?
    \answerNA{}
  \item Have you read the ethics review guidelines and ensured that your paper conforms to them?
  
    \answerYes{}
\end{enumerate}

\item If you are including theoretical results...
\begin{enumerate}
  \item Did you state the full set of assumptions of all theoretical results?
  
    \answerYes{See Section \ref{sec.theory}}
  \item Did you include complete proofs of all theoretical results?
  
    \answerYes{See Appendix}
\end{enumerate}

\item If you ran experiments...
\begin{enumerate}
  \item Did you include the code, data, and instructions needed to reproduce the main experimental results (either in the supplemental material or as a URL)?
  
    \answerYes{Full instructions on how to reproduce our experiments is provided in the supplementary material.}
  \item Did you specify all the training details (e.g., data splits, hyperparameters, how they were chosen)?
  
    \answerYes{See Section \ref{sec.exp}}
        \item Did you report error bars (e.g., with respect to the random seed after running experiments multiple times)?
  
    \answerYes{See Section \ref{sec.exp}}
        \item Did you include the total amount of compute and the type of resources used (e.g., type of GPUs, internal cluster, or cloud provider)?
    
    \answerNA{The Computation time is not compared in this paper. }
\end{enumerate}

\item If you are using existing assets (e.g., code, data, models) or curating/releasing new assets...
\begin{enumerate}
  \item If your work uses existing assets, did you cite the creators?

    \answerYes{See Section \ref{sec.exp}}
  \item Did you mention the license of the assets?
  
    \answerNo{The authors of CIFAR-10 do not specify a license for the dataset.}
  \item Did you include any new assets either in the supplemental material or as a URL?
  
    \answerNo{We did not create any new asset in this research. }
  \item Did you discuss whether and how consent was obtained from people whose data you're using/curating?
  
    \answerNo{We did not collect any data ourselves.}
  \item Did you discuss whether the data you are using/curating contains personally identifiable information or offensive content?
  
    \answerNA{CIFAR-10 dataset we use in our experiment does not contain any identifiable human information.}
\end{enumerate}

\item If you used crowdsourcing or conducted research with human subjects...
\begin{enumerate}
  \item Did you include the full text of instructions given to participants and screenshots, if applicable?
  
    \answerNA{}
  \item Did you describe any potential participant risks, with links to Institutional Review Board (IRB) approvals, if applicable?
  
    \answerNA{}
  \item Did you include the estimated hourly wage paid to participants and the total amount spent on participant compensation?
  
    \answerNA{}
\end{enumerate}

\end{enumerate}


\newpage
\appendix


\section{Proof of Proposition \ref{prop.roc}}
\label{sec.poof.prop.roc}
\begin{proof}
    Jensen's inequality: 
    \begin{align*}
        \arc{\mathrm{ROC}}(t) = \sqrt{2} \int \sqrt{\frac{f^2_{+}(\tau)}{2} + \frac{f^2_{-}(\tau)}{2}} \mathrm{d} \tau \ge  \sqrt{2} \int \frac{|f_{+}(\tau)|}{2} + \frac{|f_{-}(\tau)|}{2} \mathrm{d} \tau 
        = \sqrt{2}.
    \end{align*}
    Triangle inequality: 
    $
        \arc{\mathrm{ROC}} \le \int |f_{+}(\tau)| + |f_{-}(\tau)|  \mathrm{d} \tau  
        = 2.
    $
\end{proof}

\section{Geometric Properties of $\rockstar$}
\label{proof.convex.longest}
Here we prove a result regarding some other geometric properties of $\rockstar$.
\begin{prop}
    \label{prop.convex.longest}
    $\mathrm{ROC}^*$ is a convex curve and $\arcrocstar$ is the longest among all convex ROC curves. 
\end{prop}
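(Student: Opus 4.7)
I plan to treat the two assertions separately and rely on tools already developed in the paper.

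For the first assertion (convexity of $\rockstar$), my plan is to read off the sign of the second derivative from the slope formula \eqref{eq.slope.angle}. That formula tells us that the tangent slope of $\rockstar$ at FPR level $s_0 \in [0,1]$ equals $p_+(\boldx_0)/p_-(\boldx_0)$, where $\boldx_0$ is any point satisfying $\gamma\!\left(p_+(\boldx_0)/p_-(\boldx_0)\right) = \tilde{F}_-^{*-1}(s_0)$, with $\gamma$ the strictly increasing function defining $t^*$. Solving for the likelihood ratio gives slope $=\gamma^{-1}\!\bigl(\tilde{F}_-^{*-1}(s_0)\bigr)$. Since $\tilde{F}_-^{*-1}$ is strictly decreasing in $s_0$ (because $\tilde{F}_-^* = 1-F_-^*$ and $F_-^*$ is strictly increasing) and $\gamma^{-1}$ is strictly increasing, the composition is strictly decreasing in $s_0$. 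A curve from $(0,0)$ to $(1,1)$ whose slope monotonically decreases from $+\infty$ to $0$ is convex in the ROC sense used throughout the paper (it is the upper boundary of a convex region together with the axes), which settles the first half.

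For the second assertion (longest arc length), the plan is to invoke the data processing inequality for $f$-divergences. Section \ref{sec.fdiv.on.score} established that for any score function $t$,
\[
\arc{\mathrm{ROC}}(t) - \sqrt{2} = \mathrm{D}_g(f_+ \| f_-), \qquad g(z) = \sqrt{z^2+1} - \sqrt{2},
\]
and equation \eqref{eq.optimal.length} gives
\[
\arcrocstar - \sqrt{2} = \mathrm{D}_g(p_+ \| p_-).
\]
Since the score function $t$ is a measurable map, the score distributions $f_+$ and $f_-$ are the pushforwards of $p_+$ and $p_-$ under $t$. Because $g$ is convex with $g(1)=0$, the standard data processing inequality for $f$-divergences yields $\mathrm{D}_g(f_+\|f_-) \le \mathrm{D}_g(p_+\|p_-)$, hence $\arc{\mathrm{ROC}}(t) \le \arcrocstar$ for every score function $t$. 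This in particular covers every ROC curve that is convex in the paper's sense, so $\arcrocstar$ is the longest among them.

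The main obstacle I anticipate is largely expository rather than technical: one has to be precise about what is meant by ``convex ROC curve'' (curves arising from score functions on the given $p_+,p_-$, oriented so that the region they enclose with the ROC unit square is convex), and one has to confirm that the DPI argument, which bounds $\arc{\mathrm{ROC}}(t)$ for every $t$, is not vacuously weaker than the claim about convex curves only. Once the setting is pinned down, both steps are short and use only already-stated facts from the paper.
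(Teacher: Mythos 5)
Your proof is correct, but the two halves stand differently relative to the paper.

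For the convexity of $\rockstar$, your argument is essentially the paper's: the paper also starts from \eqref{eq.slope.angle}, writes the slope as $\gamma^{-1}(\tilde{F}_-^{-1}(s))$, and then takes one more derivative to exhibit $\partial^2_s\tilde{F}_+(\tilde{F}_-^{-1}(s)) \le 0$ explicitly, whereas you stop at ``slope is the composition of a strictly decreasing map with a strictly increasing map, hence strictly decreasing.'' Same content, slightly less formal phrasing. (One small caveat: you assert the slope runs from $+\infty$ down to $0$, which depends on the support of $p_+/p_-$ and on $\gamma$; you only need monotone decrease, so better to drop the endpoint claim.)

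For the ``longest'' half, you take a genuinely different route, and arguably a better one. The paper first shows $\rockstar$ dominates every other ROC curve via the Neyman--Pearson lemma and then invokes Archimedes's second axiom (among convex curves sharing endpoints, the enclosing one has the longest arc). You instead observe that $\arc{\mathrm{ROC}}(t) - \sqrt{2} = \mathrm{D}_g(f_+\Vert f_-)$ where $f_\pm$ are the pushforwards of $p_\pm$ under $t$, and then apply the data-processing inequality for $f$-divergences to conclude $\mathrm{D}_g(f_+\Vert f_-) \le \mathrm{D}_g(p_+\Vert p_-) = \arcrocstar - \sqrt{2}$. This buys two things. First, it is purely analytic: no appeal to the geometry of convex curves or to Archimedes's axiom, just the $f$-divergence identity the paper has already established and a standard inequality. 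Second, it actually proves a stronger statement: $\arcrocstar \ge \arc{\mathrm{ROC}}(t)$ for \emph{every} measurable score $t$, not just for those whose ROC curves happen to be convex. The restriction to convex ROC curves in the statement is then vacuously satisfied. Your worry at the end (``is the DPI argument vacuously weaker than the claim about convex curves only?'') is backwards: the DPI bound applies to a superset of the curves in the claim, so it is strictly stronger, not weaker.
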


\begin{proof}
    First, we show $\rockstar$ is a convex curve.
    To show $\rockstar$ is convex, we only need to show $\tilde{F}_+(\tilde{F}^{-1}_-(s))$ is a concave function. 
    This can be verified by checking the sign of $\partial^2_s\tilde{F}_+(\tilde{F}_-^{-1}(s))$:
    \begin{align*}
        \partial_s \tilde{F}_+(\tilde{F}_-^{-1}(s)) &= 
        \frac{ f_{+}[\tilde{F}_-^{-1}(s)]}{f_{-}[\tilde{F}_-^{-1}(s)]} 
        = \frac{p_+(\boldx_0)}{p_-(\boldx_0)} = \gamma^{-1}\left(\tilde{F}_-^{-1}(s)\right),
    \end{align*}
    where the second equality is due to \eqref{eq.ratiosratioisratio} and $\boldx_0$ is any point in $\mathcal{X}$ that satisfies the equality $\gamma\left(\frac{p_+(\boldx_0)}{p_-(\boldx_0)}\right)= \tilde{F}_-^{-1}(s)$. Further, we can show that, 
    \begin{align*}
        \partial^2_s \tilde{F}_+(\tilde{F}^{-1}_-(s;t^*)) &= 
        -\frac{1}{\partial_s \gamma[\gamma^{-1}(\tilde{F}^{-1}_-(s))]} \cdot \frac{1}{f_-[(\tilde{F}^{-1}_-(s)]}. 
    \end{align*}
    Since $\gamma$ is a strictly monotone increasing function, the first factor is non-negative and the second factor is also strictly positive due to the our assumption on the positivity of $f_-$. We have $\partial^2_s F_+(F^{-1}_-(s;t^*))\le 0$. 
    Moreover, at any FPR level $s\in [0,1]$, the Neyman-Pearson lemma \cite{NeymanPearson1933} implies
    $$
        \tilde{F}_+(\tilde{F}_-^{-1}(s)) \ge \tilde{F}'_+(\tilde{F}_-^{'-1}(s)), 
    $$
    where $\tilde{F}'_+$ and $\tilde{F}'_-$ are TPR and FPR of any other score function. 
    In words, $\rockstar$ dominates all other ROC curves. 
    Since $\rockstar$ is convex and encloses all other ROC curves, our claim follows Archimedes's Second Axiom: among all convex curves with the same endpoints, the one encloses all other curves has the longest arc length. 
\end{proof}

\section{Proof of Proposition \ref{thm:prop.TVbound}}
\label{sec:proof.prop.TVBound}

\begin{proof} 
    Using the integral probability metric representation of $\mathrm{TV}(\mathbb{P}_+, \mathbb{P}_-)$ \citep{sriperumbudur2012empirical}, we can write:  
    \begin{align*}
        \frac{\pi}{2}\mathrm{TV}(\mathbb{P}_+, \mathbb{P}_-)  
        =& \sup_{\|v\|_\infty \le 1} \mathbbE_{p_+} \left[\frac{\pi}{2} \cdot \frac{(v(\boldx) + 1)}{2}\right] - \mathbbE_{p_-} \left[\frac{\pi}{2} \cdot \frac{(v(\boldx) + 1)}{2} \right]\\
        =& \sup_{v'\in [0,\pi/2]} \mathbbE_{p_+} [v'(\boldx)]  + \mathbbE_{p_-} \left[ - v'(\boldx) \right]
    \end{align*}
    \red{
    Some algebra can show that $z \ge \frac{\sin(z)}{a}  + \arccos(a) - \frac{\sqrt{1-a^2}}{a} $ and $-z \ge \frac{\cos(z)}{a}  - \arcsin(a) - \frac{\sqrt{1-a^2}}{a} $ for all $a \in [0,1]$ and $z \in [0, \pi/2]$. Therefore 
    \begin{align*}
        \frac{\pi}{2}\mathrm{TV}(\mathbb{P}_+, \mathbb{P}_-)  
        &\ge \sup_{v'\in [0,\pi/2]} \mathbbE_{p_+} \left[\frac{\sin(\boldx)}{a}\right]  + \mathbbE_{p_-} \left[  \frac{\cos(\boldx)}{a} \right] + \arccos(a) - \arcsin(a) - \frac{2\sqrt{1-a^2}}{a}
        \\
        &\ge \frac{\arcrocstar}{a} + \arccos{a} - \arcsin{a} - \frac{2\sqrt{1-a^2}}{a}.
    \end{align*}
    }
    Similarly, multiplying both sides of the second equality above by $\frac{2}{\pi}$, we obtain 
    \begin{align*}
        \mathrm{TV}(\mathbb{P}_+, \mathbb{P}_-)  
        &= \sup_{v'\in [0,\pi/2]} \mathbbE_{p_+} \frac{2}{\pi}v'(\boldx)  + \mathbbE_{p_-} \left[ - \frac{2}{\pi}v'(\boldx) \right]\\
        &\le \sup_{v'\in [0,\pi/2]} \mathbbE_{p_+}  \sin(v'(\boldx))+ \mathbbE_{p_-} \left[ \cos v'(\boldx) -1 \right] \\
        &= \arcrocstar - 1.
    \end{align*}
\end{proof}

\section{Proof of Proposition \ref{prop.neigh}}
\label{sec.proof.prop.neigh}
\begin{proof}
      $\forall v\in \mathcal{H}^*, |\langle v,\varphi(\boldx) \rangle - \langle v^*,\varphi(\boldx) \rangle| \le \|v-v^*\|_\mathcal{H}\|\varphi(\boldx)\|_\mathcal{H}\le \delta_{n_\mathrm{min}}$. If $\delta_{n_\mathrm{min}} < \min(R_1, \frac{\pi}{2}-R_2)$ then $\langle v, \varphi(\boldx) \rangle \in (0, \frac{\pi}{2})$ holds uniformly for every $\boldx \in \mathcal{X}$. As $\delta_{n_\mathrm{min}}$ is a decaying sequence, there always exists an $N$ such that  $\delta_{n_\mathrm{min}} \le \min(R_1, \frac{\pi}{2}-R_2)$ holds for $n_\mathrm{min}\ge N$.     %
\end{proof}

\section{Proof of Theorem \ref{thm:base}}
\label{sec:proof.them.1}

To reduce the visual clutter, in this section, $\|v\|$ represents the Hilbert space norm of $v$, defined as $\sqrt{\langle v, v \rangle}$. We simplify  $\mathbbE_{p_+}  [v(\boldx)]$ as $\mathbbE_+[v(\boldx)]$ whenever it does not lead to confusion. For ease, we write $\sum_{i=1}^{n_+} f(\boldx^+_i)$ as $\sum_{i=1}^{n_+} f(\boldx_i)$, a convention which will adopted henceforth.

\begin{proof}
    Define $\mathcal{H}^* := \{v\in \mathcal{H}| \|v-v^*\|^2\le \delta^2\}$. 
    Consider an optimization that is similar to \eqref{eq:obj_rkhs}: 
    \begin{align}
        \label{eq.obj.cons}
        \tilde{v} &:= \argmin_{v\in\mathcal{H}^*} \ell(v) + \frac{\lambda}{2}\|v\|^2
    \end{align} 

    Define $\tilde{u} := \tilde{v} - v^*$ and we have the following equality due to the KKT conditions of \eqref{eq.obj.cons} 
    \begin{align*}
    \nabla_v \ell(\tilde{v}) + \lambda \tilde{v} + 2 \nu \tilde{u} = 0, 
   \end{align*}
   where $\nu$ is a Lagrangian multiplier and $\nu \ge 0$. Multiplying both sides by $\tilde{s} = \left(\boldSigma_{v^*}+\lambda\boldI\right)^{-1}\tilde{u}$, we have 
   \begin{align*}
    \langle \tilde{s}, \nabla_v \ell(\tilde{v}) + \lambda \tilde{v} + 2 \nu \tilde{u} \rangle = 0.
   \end{align*}

    Let $g(v) := \langle \tilde{s}, \nabla_v \ell(v) + \lambda v +  2\nu (v - v^*) \rangle, $ we can 
    applying Mean Value Theorem (MVT) on the scalar valued function $g(v)$:
    \begin{align}
    \label{eq.mvt}
        g(\tilde{v}) - g(v^*) = \nabla_v g(\bar{v})\tilde{u}, 
    \end{align}
    where $\bar{v} = a v^* + (1-a)\hat{v}$ for some $a\in [0,1]$.
    Knowing $g(\tilde{v}) = 0$ and $g(v^*) = \langle \tilde{s}, \nabla_v \ell(v^*) +\lambda v^* \rangle$, we can translate \eqref{eq.mvt} into 
    \begin{align}
    \label{eq.proof.kkt}
        \langle \tilde{s}, -\nabla_v \ell(v^*) - \lambda v^*  \rangle = \langle \tilde{s}, [\nabla^2_f \ell(\bar{v}) + \lambda\boldI + 2\tilde{\nu}\boldI] \tilde{u}\rangle, 
    \end{align}
    where $\boldI$ is the identify matrix. 
    Focusing on the RHS, 
    we have
    \begin{align}
        \label{eq.proof.0}
        \langle \tilde{s}, [\nabla^2_f \ell(\bar{v}) + \lambda\boldI + 2\nu \boldI] \tilde{u}\rangle 
        \ge&\langle \tilde{s}, [\nabla^2_f \ell(\bar{v}) + \lambda\boldI] \tilde{u} \rangle \notag \\
        \ge&  \underbrace{\langle (\boldSigma_{v^*} + \lambda\boldI)^{-1}\tilde{u},[\boldSigma_{v^*} + \lambda\boldI ] \tilde{u}\rangle }_{\|\tilde{u}\|^2} - \textcolor{red}{\underbrace{\langle \tilde{s}, [\boldSigma_{v^*} - \boldSigma_{\bar{v}}] \tilde{u}\rangle }_{a}}  - \textcolor{blue}{\underbrace{ \langle \tilde{s}, [ \boldSigma_{\bar{v}} - \nabla^2_f \ell(\bar{v})  ] \tilde{u}\rangle}_{b}} \notag \\
        \ge & \|\tilde{u}\|^2-\textcolor{red}{a}-\textcolor{blue}{b}.
    \end{align}
    The first line is due to the fact that $\langle 2\nu\tilde{s}, \tilde{u}\rangle \ge 0$. 
    Use the inequality \eqref{eq.proof.0} on \eqref{eq.proof.kkt}, we get the inequality 
    \begin{align}
        \label{eq.proof.kkt2}
        \langle \tilde{s}, -\nabla_v \ell(v^*) - \lambda v^* \rangle  \ge \|\tilde{u}\|^2 - a - b.
    \end{align}
    First, let us inspect \textcolor{red}{$a$}. Using MVT on $\sin\langle v, \varphi(\boldx)\rangle, v\in \mathcal{H}^*$ and applying Hölder's inequality, we get
    \begin{align}
        \label{eq.sin.lip}
        \sin\langle v^*, \varphi(\boldx) \rangle - \sin\langle v^*+\bolddelta', \varphi(\boldx) \rangle \le  \|\bolddelta'\| \cdot \|\varphi(\boldx)\|\le  \delta \cdot \|\varphi(\boldx)\|.
    \end{align}
    Define $\boldSigma^{+}_v := \mathbb{E}_+ \left[\sin \langle v,\varphi(\boldx) \rangle \varphi(\boldx) \otimes \varphi(\boldx)\right]$
    and $\hat{\boldSigma}^{+}_v$ as its empirical 
    counterpart approximated using $X_+$. 
    We can see that $a = \langle \tilde{s}, [\boldSigma^+_{v^*} - \boldSigma^+_{\bar{v}}]\tilde{u} \rangle + \langle \tilde{s}, [\boldSigma^-_{v^*} - \boldSigma^-_{\bar{v}}] \tilde{u} \rangle$. Moreover, 
    \begin{align*}
        \langle \tilde{s}, [\boldSigma^+_{v^*} - \boldSigma^+_{\bar{v}}]\tilde{u} \rangle &\stackrel{\mathrm{i}}{\le} \mathbbE_+ \left\{\delta\cdot\|\varphi(\boldx)\| \cdot \langle \tilde{s}, \varphi(\boldx) \otimes \varphi(\boldx) \tilde{u}\rangle \right\}  \\
        & \le \delta \langle \tilde{u}, \mathbbE_+ \left\{ (\boldSigma_{v^*} + \lambda\boldI)^{-1} \varphi(\boldx) \rangle \cdot \|\varphi(\boldx)\| \right\} \cdot\|\tilde{u}\|\\
        & \le \delta \|\tilde{u}\| \cdot \| (\boldSigma_{v^*} + \lambda\boldI)^{-1} \mathbbE_+ \varphi(\boldx) \| \cdot \|\tilde{u}\| 
    \end{align*}
    ($\mathrm{i}$) is due to \eqref{eq.sin.lip}. 
    Following a similar line of reasoning, we can see
    \begin{align*}
        \langle \tilde{s}, [\boldSigma^-_{v^*} - \boldSigma^-_{\bar{v}}] \tilde{u} \rangle \le \delta \|\left(\boldSigma_{v^*} + \lambda\boldI\right)^{-1}\mathbbE_-[\varphi(\boldx)]\|\cdot \|\tilde{u}\|^2.
    \end{align*}
    By setting $\delta \le {{4\max\left(\|\left(\boldSigma_{v^*} + \lambda\boldI\right)^{-1}\mathbbE_+[\varphi(\boldx)]\|, \|\left(\boldSigma_{v^*} + \lambda\boldI\right)^{-1}\mathbbE_-[\varphi(\boldx)]\|\right)}}^{-1},$ we have 
    \begin{align}
        \label{eq.proof.bound.b}
        \textcolor{red}{a \le \frac{\|\tilde{u}\|^2}{2}}.
    \end{align}

    Now we inspect \textcolor{blue}{$b$}. 
    We can see $|{\color{blue}{b}}| \le \left|\tilde{s}\hat{\boldSigma}^+_{\bar{v}}\tilde{u} - \mathbbE_+\left[\tilde{s}\hat{\boldSigma}^+_{\bar{v}}\tilde{u} \right] \right|+ \left|\tilde{s}\hat{\boldSigma}^-_{\bar{v}}\tilde{u} - \mathbbE_-\left[\tilde{s}\hat{\boldSigma}^-_{\bar{v}}\tilde{u}\right]\right|$.
    Define a scalar random variable \[Z_f^{(i)} := \sin\langle v,\varphi(\boldx_i) \rangle \cdot \langle \tilde{s},  \varphi(\boldx_i) \otimes \varphi(\boldx_i) \tilde{u} \rangle.\] 
    By definition $\frac{1}{n_+}\sum_{i=1}^{n_+}Z^{(i)}_f = \tilde{s}^\top \hat{\boldSigma}^+_f  \tilde{u}.$
    Therefore \[\left |\frac{1}{n_+}\sum_{i=1}^{n_+} Z^{(i)}_{\bar{v}} - \mathbb{E} Z_{\bar{v}}\right| \le \sup_v \left|\frac{1}{n_+}\sum_{i=1}^{n_+} Z^{(i)}_v - \mathbb{E} Z_v\right|.\] Since $0\le Z^{(i)}_v \le  \|\tilde{s}\|\cdot\|\tilde{u}\|\cdot \|\varphi(\boldx)\|^2 \le \|\left(\boldSigma_{v^*}+\lambda\boldI\right)^{-1}\tilde{u}\| \|\tilde{u}\|\le \frac{\|\tilde{u}\|^2}{\lambda}$, using Uniform Law of Large Number  for bounded random variable (Theorem 4.10, \cite{wainwright_2019}), 
    \[
        \sup_v \left |\frac{1}{n_+}\sum_{i=1}^{n_+} Z^{(i)}_v - \mathbb{E} Z_v \right| \le 2\mathcal{R}_{n_+}(\mathcal{F}_Z) + \frac{\|\tilde{u}\|^2\cdot \|\varphi(\boldx)\|^2}{\lambda\sqrt{n_+}},
    \]
    with high probability, where $\mathcal{R}_{n_+}(\mathcal{F}_Z)$ is the Rademacher complexity of the function class of $Z_v$. 
    It remains to bound $\mathcal{R}_{n_+}(\mathcal{F}_Z)$. It can be seen that $Z_f = h[\langle v, \varphi(\boldx)\rangle]$ where $h$ is a Lipschitz  continuos function with Lipschitz constant $\frac{\|\tilde{u}\|^2}{\lambda}$. Hence, due to Ledoux–Talagrand contraction inequality (see, e.g., (5.61) in \cite{wainwright_2019}), $\mathcal{R}_{n_+}(\mathcal{F}_Z)$ is upperbounded by,
    \begin{align*}
        \mathcal{R}_{n_+}(\mathcal{F}_Z) \le \frac{2\|\tilde{u}\|^2}{\lambda} \cdot \mathcal{R}_{n_+}(\mathcal{H}^*) \le \frac{C_0 \cdot \|\tilde{u}\|^2}{\lambda\sqrt{n_+}},
    \end{align*}
    where $C_0$ is a universal constant. The last inequality is due to Corollary 14.5 in \cite{wainwright_2019}. 
    Therefore 
    \begin{align*}
        \left|\tilde{s}\hat{\boldSigma}^+_{\bar{v}}\tilde{u} - \mathbbE_+\left[\tilde{s}\hat{\boldSigma}^+_{\bar{v}}\tilde{u} \right] \right| \le   \frac{C_0 \cdot \|\tilde{u}\|^2}{\lambda\sqrt{n_+}} + \frac{\|\tilde{u}\|^2\cdot \|\varphi(\boldx)\|^2}{\lambda\sqrt{n_+}} 
    \end{align*}
    and similarly, 
    \begin{align*}
        \left|\tilde{s}\hat{\boldSigma}^-_{\bar{v}}\tilde{u} - \mathbbE_-\left[\tilde{s}\hat{\boldSigma}^-_{\bar{v}}\tilde{u} \right] \right| \le    \frac{C_0 \cdot \|\tilde{u}\|^2}{\lambda\sqrt{n_-}} + \frac{\|\tilde{u}\|^2\cdot \|\varphi(\boldx)\|^2}{\lambda\sqrt{n_-}}.
    \end{align*}
    Therefore, 
    \begin{align}
        \label{eq.proof.bound.c}
        \textcolor{blue}{|b| \le  \frac{C_0 \cdot \|\tilde{u}\|^2}{\lambda\sqrt{n_\mathrm{min}}} + \frac{\|\tilde{u}\|^2\cdot \|\varphi(\boldx)\|^2}{\lambda\sqrt{n_\mathrm{min}}}},
    \end{align}
    with high probability.
    Substituting 
    \textcolor{red}{\eqref{eq.proof.bound.b}}and \textcolor{blue}{\eqref{eq.proof.bound.c}} into \eqref{eq.proof.kkt2}, we get 
    \begin{align*}
        \langle \tilde{s}^\top, -\nabla_v \ell(v^*) - \lambda v^* \rangle + \textcolor{blue}{\frac{\max\left(C_0,\|\varphi(\boldx)\|^2\right)\cdot \|\tilde{u}\|^2} {\lambda \sqrt{n_\mathrm{min}}}} &\ge \|\tilde{u}\|^2 - \textcolor{red}{\frac{1}{2}\|\tilde{u}\|^2}. 
    \end{align*}
    Using triangle inequality and Hölder's inequality, we have 
    \begin{align}
        \label{eq.proof.kkt3}
        \textcolor{OliveGreen}{-\langle \tilde{s}, \nabla_v \ell(v^*)\rangle} + \|\tilde{u}\| \textcolor{magenta}{\|( \boldSigma_{v^*} + \lambda\boldI)^{-1} \lambda v^* \|}  + \frac{\max\left(C_0,1\right) \|\tilde{u}\|^2} {\lambda \sqrt{n_\mathrm{min}}}   &{\ge} \frac{1}{2} \|\tilde{u}\|^2.
    \end{align}
    Due to Assumption \ref{ass0}, $ \mathbbE[\nabla_v\ell(v^*)]=0$.
    Hence, 
    \begin{align*}
    \textcolor{OliveGreen}{-\langle \tilde{s}, \nabla_v \ell(v^*)\rangle} =& -\langle \tilde{s}, \mathbbE[\nabla_v \ell(v^*)] \rangle - \langle \tilde{s}, \nabla_v \ell(v^*) - \mathbbE[\nabla_v \ell(v^*)]\rangle  \\
    =& 0 - \langle \tilde{s}, \nabla_v \ell(v^*) - \mathbbE[\nabla_v \ell(v^*)]\rangle.
    \end{align*}
    We can see 
    \begin{align}
        \label{eq.proof.1}
        |\langle \tilde{s}, \nabla_v \ell(v^*) - \mathbbE[\nabla_v \ell(v^*)]\rangle| \le \frac{\| \tilde{u}\|}{\lambda} \|\nabla_v \ell(v^*) - \mathbbE[\nabla_v \ell(v^*)]\| \le  \textcolor{OliveGreen}{\frac{C_1\|\tilde{u}\|}{\lambda \sqrt{n_\mathrm{min}}}}
    \end{align}
    holds with high probability and $C_1$ is a universal constant (due to Lemma \ref{lem.probability.bound}).

    Moreover, since $v^* \in \mathcal{R}(\boldSigma^\beta_{v^*})$, there exists $g\in \mathcal{H}$, $v^* = \boldSigma_{v^*}^{\beta}g$. Notice $\boldSigma_{v^*}$ is a bounded, compact, self-adjoint linear operator (see Section \ref{sec:proof.operator.properties}). Therefore, Hilbert-Schmidt Theorem indicates, 
    $\boldSigma_{v^*} = \sum_i \alpha_i \psi_i \langle \psi_i, \cdot \rangle$, where $\psi_i, \alpha_i$ are eigenfunctions and eigenvalues of $\boldSigma_{v^*}$ respectively.
    Hence, 
    \begin{align}
    \label{eq.proof.3}
    \|( \boldSigma_{v^*} + \lambda\boldI)^{-1} v^* \lambda  \| 
    =
    \|( \boldSigma_{v^*} + \lambda\boldI)^{-1} \boldSigma_{v^*}^{\beta}g \lambda  \| 
    \le&   \left\|\sum_i \langle \boldpsi_i, g\rangle  \boldpsi_i \cdot \frac{ \alpha^\beta_i \lambda}{\alpha_i + \lambda} \right\| \notag \\
    \le& \lambda^\beta \left\|\sum_i \langle \boldpsi_i, g\rangle  \right\| 
    \le \textcolor{magenta}{\|{\boldSigma_{v^*}}^{-\beta} v^*\|\cdot \lambda^\beta}. 
    \end{align}
    Combine \eqref{eq.proof.kkt3}, \eqref{eq.proof.1} and \eqref{eq.proof.3} and cancel $\|\tilde{u}\|$, we can conclude that 
    \[      
        \textcolor{OliveGreen}{\frac{C_1}{\lambda \sqrt{n_\mathrm{min}}}} +  \textcolor{magenta}{\|{\boldSigma_{v^*}}^{-\beta} v^*\|\cdot \lambda^\beta} + \textcolor{blue}{\frac{\max\left(C_0,\|\varphi(\boldx)\|^2\right)} {\lambda \sqrt{n_\mathrm{min}}}}  \ge \frac{1}{2}\|\tilde{u}\|,
    \]
    with high probability.
    Set $\lambda = \frac{\max(C_1, C_0,1)}{n_\mathrm{min}^{1/4}}$, we have 
    \begin{align*}
        \frac{2}{n_\mathrm{min}^{1/4}} + \frac{\max(C_1, C_0,1)^\beta\|{\boldSigma_{v^*}^{-\beta}} v^*\|}{n_\mathrm{min}^{\beta/4}} &\ge \frac{1}{2}\|\tilde{u}\|,
    \end{align*}
    holds with high probability. Therefore, $\exists N_2$, when $n_\mathrm{min}>N_2$, $\|\tilde{u}\| = O_p(n_\mathrm{min}^{-\beta/4})$.
 
    Since $\|\tilde{u} \|= o_p(1)$, as long as $\delta \ge K\cdot n_\mathrm{min}^{-\beta/4}$ where $K>0$ is a constant, there exists a constant $N$ such that, when $n_\mathrm{min}>N$, $\tilde{v}$ is in the interior of $\mathcal{H}^*$ with high probability.
    When this happens, the constraint $v \in \mathcal{H}^*$ is no longer active. 
    This means $\tilde{v}$ is a stationary point of the objective function in \eqref{eq.obj.cons}. Moreover, $\tilde{v} \in \mathcal{H}^*$, so it is in the feasible region of \eqref{eq:obj_rkhs} thanks to Assumption \ref{ass.local.region}. 
    This further indicates that $\tilde{v}$ is also a solution to \eqref{eq:obj_rkhs}. 
    As \eqref{eq:obj_rkhs} is a strictly convex optimization problem, $\tilde{v}$ is also its only solution. Therefore $\tilde{v} = \hat{v}$ and $\|\hat{v} - v^*\| = \|\tilde{v} - v^*\| = O_p(n_\mathrm{min}^{-\beta/4})$.
\end{proof}

\begin{lem}
    \label{lem.probability.bound}
    Given any $v^*\in \mathcal{H}$ such that $\mathbbE[\nabla_v \ell(v^*)] = 0$, if $\| \varphi(\boldx)\|_\mathcal{H} \le B$ then
    \begin{align*}
        P(\|\nabla_v \ell(v^*)\|_\mathcal{H} > \delta) \le 4\exp\left(-\frac{n_\mathrm{min}\delta^2}{B^2}\right).
    \end{align*}
\end{lem}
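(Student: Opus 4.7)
The plan is to decompose $\nabla_v \ell(v^*)$ into its ``positive'' and ``negative'' empirical-mean components, center each, and apply a Hilbert-space Hoeffding-type inequality to each piece.  A union bound then gives the stated concentration.

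First, I would compute the gradient explicitly from the definition of $\ell$:
\[
\nabla_v \ell(v^*) \;=\; \underbrace{-\frac{1}{n_+}\sum_{i=1}^{n_+} \cos\langle v^*,\varphi(\boldx^+_i)\rangle\,\varphi(\boldx^+_i)}_{=:A_+} \;+\; \underbrace{\frac{1}{n_-}\sum_{i=1}^{n_-}\sin\langle v^*,\varphi(\boldx^-_i)\rangle\,\varphi(\boldx^-_i)}_{=:A_-}.
\]
Because $\mathbbE[\nabla_v \ell(v^*)]=0$ by hypothesis, we can write $\nabla_v \ell(v^*)=(A_+ - \mathbbE A_+)+(A_- - \mathbbE A_-)$. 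The triangle inequality then gives
\[
P\big(\|\nabla_v \ell(v^*)\|_{\mathcal H} > \delta\big) \;\le\; P\!\left(\|A_+ - \mathbbE A_+\|_{\mathcal H} > \tfrac{\delta}{2}\right) + P\!\left(\|A_- - \mathbbE A_-\|_{\mathcal H} > \tfrac{\delta}{2}\right).
\]

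Next I would apply a Hoeffding-type concentration inequality for sums of bounded Hilbert-space-valued random variables (Pinelis' inequality) to each term.  The $i$-th summand in $A_+$ is $\cos\langle v^*,\varphi(\boldx^+_i)\rangle\,\varphi(\boldx^+_i)$, whose $\mathcal{H}$-norm is at most $B$ because $|\cos(\cdot)|\le 1$ and $\|\varphi(\boldx)\|_{\mathcal H}\le B$.  The analogous bound holds for the summands of $A_-$.  Hence Pinelis/Hoeffding in Hilbert space yields
\[
P\!\left(\|A_\pm - \mathbbE A_\pm\|_{\mathcal H} > \tfrac{\delta}{2}\right) \;\le\; 2\exp\!\left(-\frac{c\, n_\pm \delta^2}{B^2}\right),
\]
for an absolute constant $c>0$.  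Combining the two bounds and using $n_\mathrm{min}=\min(n_+,n_-)$ gives the desired estimate $4\exp(-n_\mathrm{min}\delta^2/B^2)$ (up to constants absorbed into $\delta$ or $B$).

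The main obstacle is invoking the correct Hilbert-space concentration result with tight constants; this is not genuinely difficult since the standard Pinelis inequality handles bounded, zero-mean Hilbert-space-valued increments directly, but one must be careful that centering does not degrade the bound by more than a universal constant factor.  Otherwise, the argument is a straightforward splitting-plus-union-bound, with no need for chaining or covering arguments because we concentrate at a single fixed point $v^*$.
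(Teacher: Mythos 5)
Your proof follows essentially the same route as the paper's: decompose $\nabla_v\ell(v^*)$ into the positive- and negative-sample empirical averages, center each using $\mathbbE[\nabla_v\ell(v^*)]=0$, apply a Hilbert-space Hoeffding/Pinelis inequality to each piece (each summand being bounded in norm by $B$), and finish with a union bound, using $n_\mathrm{min}=\min(n_+,n_-)$. The only cosmetic difference is that you split the threshold as $\delta/2+\delta/2$ while the paper concentrates each piece at level $\delta$ and then bounds $P(\|a+b\|_\mathcal{H}>2\delta)$; both proofs quietly absorb an unnamed universal constant from the Hilbert-space Hoeffding bound into the exponent, exactly as you flag.
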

\begin{proof}    
    Write down the definition of $\nabla_v \ell(v^*)$. Notice
    \begin{align*}
        \nabla_v \ell(v^*) &= \underbrace{-\frac{1}{n_+} \sum_{i=1}^{n_+}\cos\langle v, \varphi(\boldx_i) \rangle \varphi(\boldx_i)}_{a}  + \underbrace{\frac{1}{n_-} \sum_{i=1}^{n_-} \sin \langle v, \varphi(\boldx_i) \rangle \varphi(\boldx_i)}_{b}.
    \end{align*}
    By using Hilbert-space Hoeffding's inequality \cite{rosasco2010learning}, we know for all $\delta_a, \delta_b >0 $
    \begin{align*}
        P(\|a - \mathbbE[a]\|_\mathcal{H} > \delta_a) \le 2\exp\left(-\frac{Cn_+\delta_a^2}{B^2}\right) \text{ and } P(\|b - \mathbbE[b]\|_\mathcal{H} > \delta_b) \le 2\exp\left(-\frac{Cn_-\delta_b^2}{B^2}\right), 
    \end{align*}
    where $C$ is a constant. 
    Let $\delta = \delta_a = \delta_b$,
    \begin{align*}
        P(\|a+b\|_\mathcal{H} > 2\delta) =& P(\|a+b - (\mathbbE[a]+\mathbbE[b])\|_\mathcal{H} > 2\delta) \\
        \le&   P(\|a - \mathbbE[a]\|_\mathcal{H} + \|b - \mathbbE[b]\|_\mathcal{H} > \delta_a + \delta_b)\\
        \le&  P(\|a - \mathbbE[a]\|_\mathcal{H}  > \delta_a) + P(\|b - \mathbbE[b]\|_\mathcal{H} > \delta_b) \\
        \le&  4\exp\left(-\frac{Cn_\mathrm{min}\delta^2}{B^2}\right),
    \end{align*}
    where the first equality used the condition that $\mathbb{E}[\nabla_v \ell(v^*)] = \mathbbE[a] + \mathbbE[b] =0 $. 
    This completes the proof.  
\end{proof}

\section{Proof of Proposition \ref{prop.optimal.score}}
\label{sec:sec.proof.optimal.score}
\begin{proof}
    We start from the definition of $\mathbbE[\nabla_v \ell(v^*)]$:
    \begin{align*}
        -\mathbbE[\nabla_v \ell(v^*)] 
        =& \mathbbE_{+} \left[\frac{1}{n_+} \sum_{i=1}^n\cos\langle v^*, \varphi(\boldx_i) \rangle \varphi(\boldx_i) \right] - \mathbbE_{-} \left[\frac{1}{n_-} \sum_{i=1}^n \sin \langle v^*, \varphi(\boldx_i) \rangle \varphi(\boldx_i) \right]\\
        =& \mathbbE_{+} \left[\cos\langle v^*, \varphi(\boldx) \rangle \varphi(\boldx)\right] - \mathbbE_{-} \left[\frac{\sin \langle v^*, \varphi(\boldx) \rangle}{\cos\langle v^*, \varphi(\boldx) \rangle}\cos \langle v^*, \varphi(\boldx) \rangle \varphi(\boldx)\right]\\
        =& \mathbbE_{+} \left[\cos\langle v^*, \varphi(\boldx) \rangle \varphi(\boldx) \right] - \mathbbE_{-} \left[ \frac{p_+(\boldx)}{p_-(\boldx)} \cos\langle v^*, \varphi(\boldx)  \rangle \varphi(\boldx) \right]\\ 
        =& \mathbbE_{+} \left[\cos\langle v^*, \varphi(\boldx) \rangle \varphi(\boldx)  \right]  - \mathbbE_{+} [\cos\langle v^*, \varphi(\boldx) \rangle \varphi(\boldx) ]= 0,
    \end{align*}
    where the third equality is due to the fact that $\langle v^* , \varphi(\boldx)\rangle = \atan \frac{p_+(\boldx)}{p_-(\boldx)}$. 
    Since $p_+/p_- \in [0, \infty)$ , $\langle v, \varphi(\boldx) \rangle \in [0,\pi/2)$. 
    As $v^*$ is unique by assumption, Assumption \ref{ass0} holds.
\end{proof}

\section{Properties of Operator $\boldSigma_{v_0}$}
\label{sec:proof.operator.properties}
By construction, it is easy to verify that $\boldSigma_{v_0}$ is self-adjoint. 

First, we prove that the integral operator 
\[\boldSigma_{v_0}u = \mathbbE_+[\sin\langle v_0, \varphi(\boldx)\rangle \varphi(\boldx) \cdot u(\boldx)] + \mathbbE_-[\cos\langle v_0, \varphi(\boldx)\rangle \varphi(\boldx) \cdot u(\boldx)], \]
is a bounded operator. For all $u\in \mathrm{Ball}(0,1),$ where $\mathrm{Ball}(0,1)$ is the unit ball in $\|\cdot\|_\mathcal{H}$,
\begin{align*}
    \|\boldSigma_{v_0}u\|_\mathcal{H} &\le \|\mathbbE_+[\sin\langle v_0, \varphi(\boldx)\rangle \varphi(\boldx) \cdot u(\boldx)]\|_\mathcal{H} + \|\mathbbE_-[\cos\langle v_0, \varphi(\boldx)\rangle \varphi(\boldx) \cdot u(\boldx)]\|_\mathcal{H}\\
    &\le \mathbbE_+[\|\sin\langle v_0, \varphi(\boldx)\rangle \varphi(\boldx)\|_\mathcal{H} \cdot \|u(\boldx)\|_\mathcal{H}] + \mathbbE_-[\|\cos\langle v_0, \varphi(\boldx)\rangle \varphi(\boldx)\|_\mathcal{H} \cdot \|u(\boldx)\|_\mathcal{H}]\\
    &\le \mathbbE_+[\| \varphi(\boldx)\|_\mathcal{H} \cdot \|u(\boldx)\|_\mathcal{H}] + \mathbbE_-[\|\varphi(\boldx)\|_\mathcal{H} \cdot \|u(\boldx)\|_\mathcal{H}]\\
    &\le \mathbbE_+[\| \varphi(\boldx)\|_\mathcal{H}] + \mathbbE_-[\|\varphi(\boldx)\|_\mathcal{H}].
\end{align*}
Hence, $\boldSigma_{v_0}$ is a bounded operator as long as $\| \varphi(\boldx)\|_\mathcal{H}$ is bounded. 

Second, we show $\boldSigma_{v_0}$ is trace class hence compact. Let $\boldpsi_i, i \in \mathbb{N}$ be an orthonormal basis in $\mathcal{H}$, then 
\begin{align*}
    &\sum_i \langle \boldpsi_i, \boldSigma_{v_0}\boldpsi_i \rangle \\
    = & \mathbbE_+[\sin\langle v_0, \varphi(\boldx)\rangle \sum_{i\in \mathbb{N}}\langle \boldpsi_i, \varphi(\boldx) \otimes \varphi(\boldx) \boldpsi_i\rangle ] + \mathbbE_-[\cos\langle v_0, \varphi(\boldx)\rangle \sum_{i\in \mathbb{N}}\langle \boldpsi_i, \varphi(\boldx) \otimes \varphi(\boldx) \boldpsi_i\rangle ]\\
    = & \mathbbE_+[\sin\langle v_0, \varphi(\boldx)\rangle \sum_{i\in \mathbb{N}} \langle \boldpsi_i,\varphi(\boldx)\rangle^2 ] + \mathbbE_-[\cos\langle v_0, \varphi(\boldx)\rangle \sum_{i\in \mathbb{N}}\langle \boldpsi_i,\varphi(\boldx)\rangle^2 ]\\
    = & \mathbbE_+[\sin\langle v_0, \varphi(\boldx)\rangle \cdot \|\varphi(\boldx)\|^2_\mathcal{H} ] + \mathbbE_-[\cos\langle v_0, \varphi(\boldx)\rangle \cdot \|\varphi(\boldx)\|^2_\mathcal{H} ] < \infty
\end{align*}
holds as long as $\| \varphi(\boldx)\|_\mathcal{H}$ is bounded.
This shows $\boldSigma_{v_0}$ is trace-class and therefore, compact. 

\section{Proof of Proposition \ref{prop.auc.star}} 
\label{sec.proof.prop.auc}

\begin{proof}
Let us define for $\alpha \in [0,.5]$, 
    \[\tilde{F}^*_-(\cdot, \alpha) := 1 - [(1-\alpha) F^*_-(\cdot) + \alpha F^*_+(\cdot)], ~~~ \tilde{F}^*_+(\cdot, \alpha) := 1 - [\alpha F^*_- (\cdot) + (1-\alpha) F^*_+ (\cdot)]. \] 
    We can see that $\boldr(\tau, \alpha) := (\tilde{F}_-^*(\tau,\alpha), \tilde{F}_+^*(\tau, \alpha))$ is a parameterization for the space between $\mathrm{ROC}^*$ and the diagonal from $(0,0)$ to $(1,1)$. We can compute the surface area using the surface integral formula: 
    \[
        A_0 := \int_{\mathrm{dom}(\tau)} \int_{[0,.5]}  \|\partial_\tau\boldr(\tau, \alpha) \times \partial_\alpha\boldr(\tau, \alpha)\|  \ \mathrm{d}\alpha \mathrm{d} \tau,
    \]
    where $\partial_\tau\boldr(\tau, \alpha) = \begin{bmatrix} \partial_\tau \tilde{F}_-^*(\tau, \alpha)\\ \partial_\tau \tilde{F}_+^*(\tau, \alpha)\\
    0\end{bmatrix}$ and $\partial_\alpha\boldr(\tau, \alpha) =
    \begin{bmatrix} \partial_\alpha \tilde{F}_-^*(\tau, \alpha)\\ \partial_\alpha \tilde{F}_+^*(\tau, \alpha)\\
    0\end{bmatrix}$. 
    It can be seen that 
    $\partial_\alpha\boldr(\tau, \alpha) =
    \begin{bmatrix} F^*_-(\tau) - F^*_+(\tau)\\F^*_+(\tau) - F^*_-(\tau) \\0\end{bmatrix}$ for all $\alpha$. Rewrite $A_0$:
    \begin{align}
    \label{auc.cross.product}
            A_0 &= \int_{\mathrm{dom}(\tau)} \int_{[0,.5]}  \left| \left[ F^*_-(\tau) - F^*_+(\tau) \right] \partial_\tau \tilde{F}_+^*(\tau, \alpha) - \left[F^*_+(\tau) - F^*_-(\tau) \right] \partial_\tau\tilde{F}_-^*(\tau, \alpha)  \right|  \ \mathrm{d}\alpha \mathrm{d} \tau, \notag \\
            &= \int_{\mathrm{dom}(\tau)} \int_{[0,.5]}  \left| \left[F^*_-(\tau) - F^*_+(\tau) \right] (\partial_\tau F^*_+(\tau) + \partial_\tau F^*_-(\tau))  \right|  \ \mathrm{d}\alpha \mathrm{d} \tau, \notag \\
            &= \int_{\mathrm{dom}(\tau)} \int_{[0,.5]}  \| \bolda(\tau) \times \boldb(\tau)  \|  \ \mathrm{d}\alpha \mathrm{d} \tau,
    \end{align}
    where $\bolda(\tau) = \begin{bmatrix} F^*_-(\tau) - F^*_+(\tau)\\ F^*_+(\tau) - F^*_-(\tau) \\ 0 \end{bmatrix}$ and $\boldb(\tau) = \begin{bmatrix} \partial_\tau F^*_-(\tau) \\ \partial_\tau F^*_+(\tau)  \\ 0\end{bmatrix}$. Both $\bolda$ and $\boldb$ are free from $\alpha$. 
    Rewriting the cross product in \eqref{auc.cross.product} in a different form, we obtain 
        \begin{align}
            A_0 &= \sqrt{2}\int_{\mathrm{dom}(\tau)} \int_{[0,.5]} \sin(\theta(\tau)) \left|F^*_-(\tau) - F^*_+(\tau)\right| \sqrt{\partial_\tau F^*_+(\tau)^2 + \partial_\tau F^*_-(\tau)^2}    \ \mathrm{d}\alpha \mathrm{d}\tau, \notag \\
             &= \frac{\sqrt{2}}{2} \int_{\mathrm{dom}(\tau)} \sin(\theta(\tau))  \left|F^*_-(\tau) - F^*_+(\tau)\right| \sqrt{\partial_\tau F^*_+(\tau)^2 + \partial_\tau F^*_-(\tau)^2}  \ \mathrm{d}\tau, \notag \\
             &= \frac{\sqrt{2}}{2} \int_{\mathrm{dom}(\tau)} \sin(\theta(\tau)) f^*_ -(\tau) \left|F^*_-(\tau) - F^*_+(\tau)\right| \sqrt{\left(\frac{f^*_+(\tau)}{ f^*_-(\tau)}\right)^2 + 1}    \ \mathrm{d}\tau, 
    \end{align}
    where $\theta(\tau)$ is the angle between $\bolda(\tau)$ and $\boldb(\tau)$. $\boldb(\tau)$ is the tangent vector of the $\rockstar$. 
    Knowing the slope of $\rockstar$ is the likelihood ratio (see Section \ref{sec.fench}) and $\bolda(\tau)$ points at the 45 degree downward regardless of $\tau$,  we can see 
      $\theta(\tau) = \left[\atan \frac{p(\boldx)}{q(\boldx)}\right] + \frac{\pi}{4}$. Using the fact that $\frac{f^*_+(t(\boldx))}{ f^*_-(t(\boldx))} = \frac{p_+(\boldx)}{ p_-(\boldx)}$ and the law of unconscious statistician: 
    \begin{align*}
        A_0 = \frac{\sqrt{2}}{2}\mathbbE_{p_-}\left\{ \sin\left[\left(\atan \frac{p_+(\boldx)}{p_-(\boldx)}\right) + \frac{\pi}{4}\right]   \left|F^*_-(t^*(\boldx)) - F^*_+(t^*(\boldx))\right| \sqrt{\left(\frac{p_+(\boldx)}{ p_-(\boldx)}\right)^2 + 1} \right\}
    \end{align*}
    Replacing $\sqrt{\frac{p_+(\boldx)}{ p_-(\boldx)}^2 + 1}$ with its Fenchel dual as introduced in Section \ref{sec.fench} and pulling the $\sup$ out of the expectation yields the desired result. 
    
    Differentiating the objective \eqref{eq.weighted.arclength} with respect to $v$ and setting the derivative to zero, we can see that superemum is attained at $v^* = \atan\frac{p_+}{p_-}$.

\end{proof}

\section{Wall Clock Comparison}
\begin{figure}
  \centering
  \includegraphics[width=.5\linewidth]{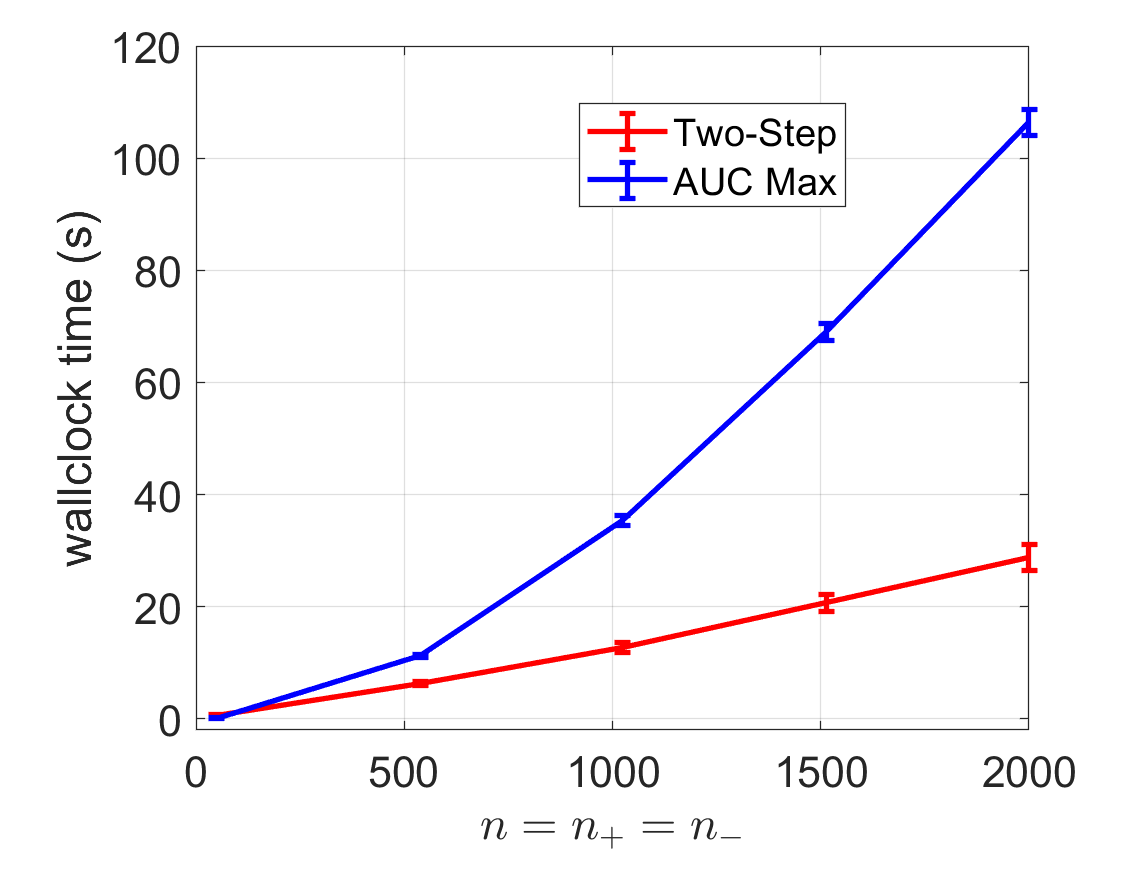}
  \captionof{figure}{The time comparison over different sample sizes $n=n_+=n_-$.}
  \label{fig:wallclock}
\end{figure}

\red{In this experiment, we evaluate the computation time of the our two-step algorithm and the naive implementation of the offline AUC maximization \eqref{eq.auc.max.empirical} by plotting the wall clock time in Figure \ref{fig:wallclock}. 
Both the AUC maximization and two-step procedure are implemented using MATLAB's optimization toolbox. 
See Section \ref{sec.exp.supp} for details. The two-step procedure's computation time grows at a much slower rate than the offline AUC maximization via a pairwise loss function. 
Note that as we explained in Section \ref{sec.comp.complexity}, if the surrogate loss is decomposable, the objective can be computed with a computational complexity $O(n\log(n))$ \citep{joachims2005support}. If it the loss is squared loss, the offline algorithm can be performed with a $O(n)$ computational complexity \citep{yingSOAUC}. 
} 

In this experiment, both methods are written in fully vectorized code. The first order derivatives are provided to the $\mathrm{fmincon}$ and $\mathrm{fminunc}$ to accelerate the computation. Code can be found in the supplementary material.

\section{Experiment Setup}
\label{sec.exp.supp}

In Section \ref{sec.exp}, we reduce the dimension of CIFAR-10 dataset to 50. We first train a residual neural network \cite{he2016deep} using logistic regression on all 10 classes. This 103-layer network structure was borrowed from a MATLAB tutorial (\url{https://www.mathworks.com/help/deeplearning/ug/train-residual-network-for-image-classification.html}). MATLAB provides a pretrained version of this network. To obtain bounded features, we append a fully connected linear layer (output dimension 50) and a bounded activation layer (clipped-relu) to the last average pooling layer in the network. We freeze the earlier layers and only train the last two layers for 5 epochs. 

The dataset and the code that reproduces Figure \ref{fig:cifar10} can be found in the supplementary materials. We invite reviewers to reproduce our results.  

\section{Estimating $\log\left[\frac{p_+(\boldx)}{p_-(\boldx)}\right]$} 
\label{sec.est.logratio}
We can also leverage that $v^*$ is the arctangent of the likelihood ratio and introduce mild assumptions on $p_+$ and $p_-$. 
When $p_+(\boldx)$ and $p_-(\boldx)$ are both members of the exponential family and share the same sufficient statistic $\boldh(\boldx) \in \mathbb{R}^m$, then $\exists \boldv^* \in \mathbbR^m$ such that $\log\left[\frac{p_+(\boldx)}{p_-(\boldx)}\right] = \langle \boldv^*, \boldh(\boldx)\rangle + C$, where $C$ is a constant. 
If we choose to parameterize the log likelihood ratio using a linear model, 
$\langle \boldv, \boldh(\boldx)\rangle + v_0$, 
then \eqref{eq.sample.obj} becomes
\begin{align}
    \label{eq.sample.obj.logratio}
            (\hat{\boldv}, \hat{v}_0) :=\argmax_{
            \substack{\boldv\in \mathbbR^m,  \\ v_0\in \mathbb{R}~~}} \frac{1}{n_+} \sum_{i=1}^{n_+} \sin[\atannospace\exp(\langle \boldv, \boldh(\boldx_i)\rangle + v_0)] 
            + \frac{1}{n_-} \sum_{i=1}^{n_-} \cos[\atannospace\exp(\langle \boldv, \boldh(\boldx_i)\rangle + v_0)].
\end{align}
Note we do not have to restrict the optimization to a bounded function family as $\log\left[\frac{p_+(\boldx)}{p_-(\boldx)}\right] \in \mathbb{R}$, and 
$\langle \hat{\boldv}, \boldh(\boldx)\rangle + \hat{v_0}$ is an estimate of the likelihood ratio. 

In this paper, we focus on \eqref{eq:obj_rkhs} since the objective in \eqref{eq.sample.obj.logratio} is non-convex with respect to $\boldv$ thus presents extra challenges in the theoretical analysis, although \eqref{eq.sample.obj.logratio} is easier to implement in practice due to its unconstrained nature.

\section{Numerical Simulation of $\atan\frac{p_+}{p_-}$ Estimation}
\red{
We draw 100 samples from $X_+ \sim \mathcal{N}(1, 1)$ and $X_- \sim \mathcal{N}(-1, 1)$ and solve \eqref{eq:obj_rkhs} to estimate the arctangent density ratio. The estimated arctangent density ratio with standard deviation (over 72 runs) are plotted in Figure \ref{fig.lengthapp}. 
We use Gaussian kernel and hyperparameters (kernel bandwidth and regularization parameter) are tuned using cross validation. 
}

\red{
We observe that the estimated arctangent ratio using the  proposed method is very close to the ground truth and has a small standard deviation. 
}

\begin{figure*}[t]
    \centering

    \includegraphics[width=.6\textwidth]{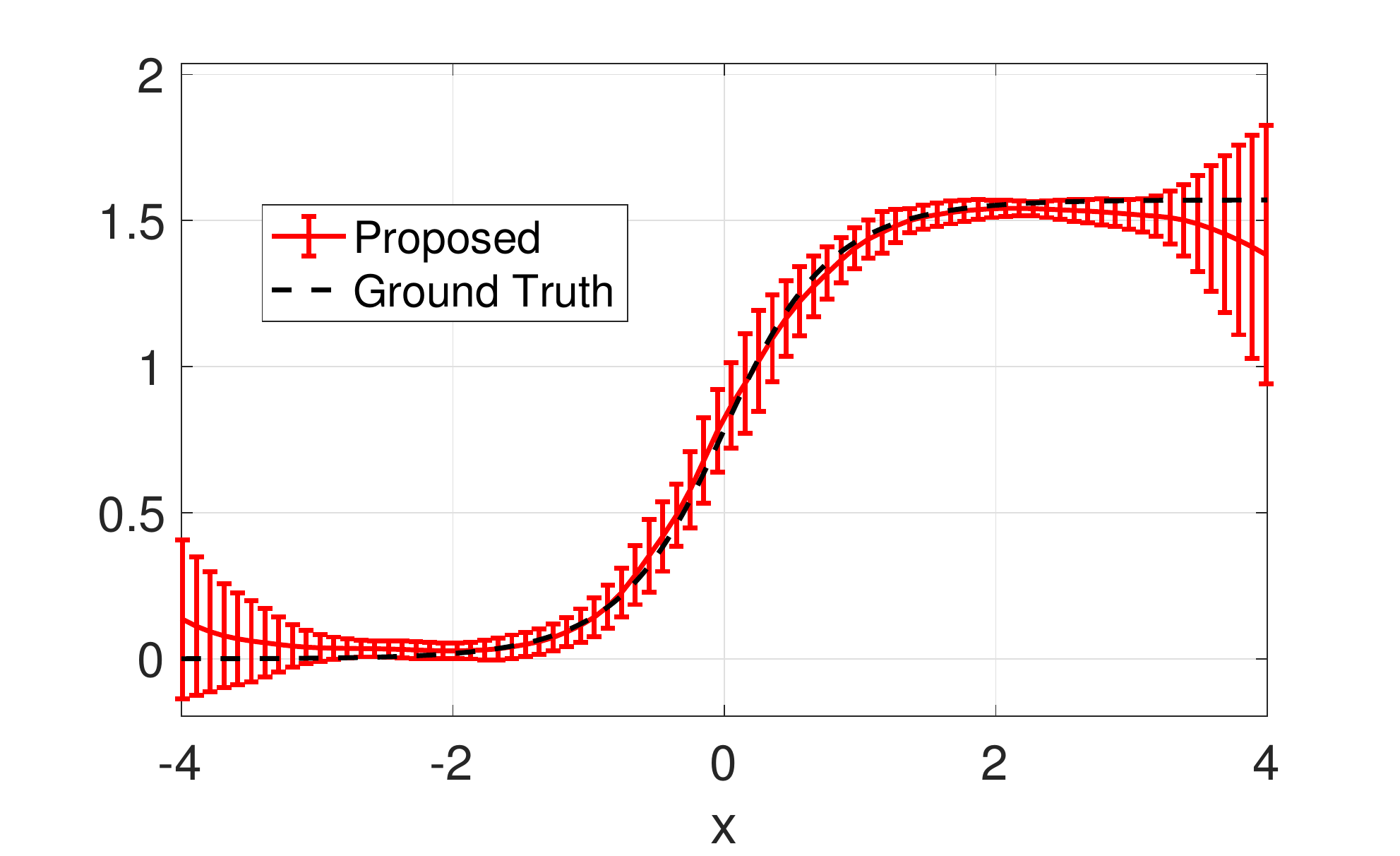}
\caption{Estimation of the arctangent density ratio function.}
\label{fig.lengthapp}
\end{figure*}

\section{Comparison with Convergence Result in \citet{Nguyen2008}}
\label{sec.comp.conv.results}
\red{
The convergence of (log) density ratio estimation have been developed for two KL divergence based estimators \cite{Nguyen2008}. However, these convergence theories are not general theories for arbitrary $f$-divergences. Thus their proofs cannot be easily applied to our ROC divergence.
}

\red{
Moreover, Theorem \ref{thm:base} is not a minor modification of convergence theories in \cite{Nguyen2008}. Specifically, \citet{Nguyen2008} prove the likelihood ratio converges in Hellinger distance, while we prove the arctangent likelihood ratio converges in Hilbert space norm. The proofs rely on completely different machinery and assumptions. These technical results depend on the variational objective functions the estimators maximize and are not interchangeable.
}
\end{document}